\newtheorem{theorem}{Theorem}
\title{Optimizing ADMM and Over-Relaxed ADMM  Parameters for \\Linear Quadratic Problems}
\author{
    %Authors
    % All authors must be in the same font size and format.
   Jintao Song\textsuperscript{\rm 1,\rm2}, Wenqi Lu\textsuperscript{\rm3,\rm4}, Yunwen Lei\textsuperscript{\rm 5}, Yuchao Tang\textsuperscript{\rm 6}, Zhenkuan Pan\textsuperscript{\rm 2}, Jinming Duan\textsuperscript{\rm 1{$\dagger$}}\\
  }
\begin{document}

\maketitle

\begin{abstract}
The Alternating Direction Method of Multipliers (ADMM) has gained significant attention across a broad spectrum of machine learning applications. Incorporating the over-relaxation technique shows potential for enhancing the convergence rate of ADMM. However, determining optimal algorithmic parameters, including both the associated penalty and relaxation parameters, often relies on empirical approaches tailored to specific problem domains and contextual scenarios. Incorrect parameter selection can significantly hinder ADMM's convergence rate. To address this challenge, in this paper we first propose a general approach to optimize the value of penalty parameter, followed by a novel closed-form formula to compute the optimal relaxation parameter in the context of linear quadratic problems (LQPs). We then experimentally validate our parameter selection methods through random instantiations and diverse imaging applications, encompassing diffeomorphic image registration, image deblurring, and MRI reconstruction.
\end{abstract}

\section{Introduction}

ADMM is a versatile algorithm with applications spanning various domains, including compressed sensing \cite{hou2022truncated,liu2023distributed}, image processing \cite{chan2016plug,yazaki2019interpolation}, and machine learning \cite{li2022robust,zhou2023federated}. Although introduced in the 1970s for optimization, its roots can be traced back to the 1950s as a method to solve elliptic and parabolic partial difference equations \cite{boyd2011distributed}. ADMM leverages the convergence strengths of the method of multipliers and the decomposability property of dual ascent. It is particularly useful in addressing convex optimization of considerable scale, beyond the capacity of conventional solvers. The ongoing research and outstanding algorithmic performance have significantly contributed to its widespread adoption, highlighting the growing importance of exploring its theoretical properties, particularly regarding parameter selection \cite{ghadimi2014optimal,wang2019admm}.

ADMM, from a technical viewpoint, decomposes complex optimization problems into manageable sub-problems, often solvable using point-wise, closed-form solvers \cite{candes2011robust,lu2016implementation,thorley2021nesterov,jia2021learning,duan2023arbitrary}. It proceeds by iteratively updating these sub-problems alternately until a solution meeting the original problem's objectives and constraints is attained. Within ADMM, the augmented Lagrange function incorporates penalty terms associated with the constraints. The penalty parameters determine the strength of these penalty terms. As highlighted in \cite{deng2016global}, the convergence rate of ADMM is directly impacted by these penalty parameters. The optimal selection of such parameters can significantly enhance the algorithm's convergence rate. However, the lack of a universal method to compute these parameters optimally remains a challenge.

The convergence rate of ADMM can be further accelerated by leveraging information from prior iterations during the computation of subsequent iterations. Such a technique is known as over-relaxation and often used in conjunction with ADMM \cite{de1986relaxed,zhang2020privacy}. Numerous research endeavors have been devoted to defining appropriate values for the resultant relaxation parameter. Notably, in the study conducted by \cite{eckstein1994parallel}, the authors proposed a widely acknowledged empirical range of values, typically falling within $[1.5, 1.8]$, which however is not always the case according to our findings in this paper. Despite a multitude of papers presenting specific guidelines for selecting this parameter, many real-world application papers \cite{stellato2020osqp,duan2023arbitrary} still resort to empirically determined values. This reliance on empirical choices is due to the absence of a straightforward and efficient method that can promptly and optimally determine this relaxation parameter.

The objective of this paper is to introduce novel methods for the selection of optimal parameters within both ADMM and over-relaxed ADMM. As an example, we focus on linear quadratic problems (LQPs), particularly with applications tailored to image processing. The theories developed in this paper could offer valuable insights for addressing other non-quadratic problems, such as non-smooth $L_1$ optimization. More specifically, we have identified four key contributions of this paper, summarized as follows:

\begin{itemize}
\item We perform a comprehensive convergence analysis of the ADMM algorithm as applied to LQPs, effectively demonstrating its unconditional convergence within the context of LQPs. This is achieved by initially converting the ADMM iterations into its fixed-point iterations, which facilitates the derivation of the iteration matrix. Subsequently, we theoretically show that the spectral radius of the iteration matrix is bounded by $1$, regardless of the value of the penalty parameter.
\item We propose a general optimization method for the selection of the optimal penalty parameter in ADMM. We achieve this by utilizing numerical gradient descent to minimize the spectral radius of the iteration matrix. Moreover, in specific scenarios like image deblurring and MRI reconstruction, we show the existence of an closed-form solution for accurately determining the optimal penalty parameter within ADMM.
\item We establish, for the first time, the existence of an closed-form solution for determining the relaxation parameter in over-relaxed ADMM. We find that for any arbitrary value of the penalty parameter, there exists a corresponding relaxation parameter, computed from the closed-form solution, that minimizes the spectral radius of the iteration matrix. Consequently, we can transform the original joint optimization problem, with respect to both penalty and relaxation parameters, into a single-variable optimization problem focused only on the penalty parameter.

\item We verify our proposed parameter selection methods through random instantiations and practical real-world imaging applications, encompassing diffeomorphic image registration, image deblurring, MRI reconstruction. This approach sets us apart from previous methods, e.g., \cite{ghadimi2014optimal}, that only depend on simulated data for validation purpose.
\end{itemize}

\section{Related Works}
\cite{boley2013local} studied the convergence rate of ADMM for both quadratic and linear programs via the spectral analysis based on a novel matrix recurrence. While acknowledging that the penalty parameters of ADMM can influence its convergence rate, they did not offer guidance on how to select these parameters. To address this issue, \cite{ghadimi2014optimal} reformulated ADMM into a fixed-point iteration system to analyze the impact of parameters on the convergence rate of ADMM and over-relaxed ADMM. By minimizing the spectral radius of the iteration matrix, they successfully derived optimal penalty and relaxation parameters for quadratic programming. \cite{teixeira2015admm} extended the applicability of Ghadimi's theory by transforming the distributed quadratic programming into an equivalent constrained quadratic programming. \cite{francca2016explicit} introduced a method that determines the relaxation parameter for semi-definite programming through the analysis of the problem's condition number. 

\cite{boyd2011distributed} suggested an empirical parameter update strategy for ADMM's penalty parameters. The idea is to maintain a proportional relationship between the norms of primal and dual residuals, ensuring their convergence to zero within a specified factor. \cite{xu2017adaptive} proposed an adaptive ADMM approach by applying the Barzilai-Borwein spectral method to the original ADMM algorithm. Their method allows to dynamically update penalty parameters in each iteration based on primal and dual residuals. Inspired by this work, \cite{mavromatis2020auto} introduced a weighted penalty parameter ADMM algorithm for solving optimal power flow problems. Their approach involves the computation of absolute values from the admittance matrix and the Hessian matrix in each ADMM iteration. These values are then used to recalibrate the penalty parameters, aiming to refine the accuracy of parameter estimation.

However, certain limitations exist in the current research landscape. Firstly, many methods \cite{boyd2011distributed,xu2017adaptive,wohlberg2017admm,mhanna2018adaptive} rely on primal and dual residuals for estimating optimal parameters during iterations, but there often lack closed-form or explicit pre-iteration parameter selection approaches. Secondly, existing parameter selection techniques, based on the spectral analysis of the iteration matrix \cite{ghadimi2014optimal,francca2016explicit}, predominantly focus on specific problem types (e.g., standard quadratic problem with $L$ being an identity matrix). These methods requiring the spectral radius of the iteration matrix to be computable in an explicit form, which restricts their applicability and generalization ability \cite{stellato2020osqp}. In this paper, we will propose effective methods to address these two challenges.

\section{Methodology}
This section starts with the introduction of essential notations utilized in the subsequent formulations. We proceed by presenting the concept of fixed-point iterations, which serves as a foundational element for both the convergence analysis and parameter selection processes. Following this, we proceed to apply both ADMM and its over-relaxed variant to address LQPs. In the final stages, we propose novel methods for selecting the penalty and relaxation parameters. This is accomplished through the conversion of ADMM and over-relaxed ADMM into the form of fixed-point iterations, followed by the utilization of spectral radius analysis.

\subsection{Notations and Fixed-Point Iterations}
Let $\mathbb{R}$ and $\mathbb{C}$ denote respectively the set of real and complex numbers, $\mathbb{R}_{++}$ denote the set of positive numbers, $\mathcal{S}^{n\times n}$ denote the set of $n \times n$ matrices, and $I^n$ (or $I$) be the $n \times n$ identity matrix. For the square matrix $T$ and its corresponding eigenvalues $\lambda's$, we define the $n$th smallest eigenvalue of $T$ as $\lambda_n(T)$, and the spectral radius of $T$ as $\rho(T)$. 

Fixed-point iterations involve the iterative process below
\begin{equation*}
	u^{k+1}=Tu^{k}+c,
\end{equation*}
where $T \in \mathcal{S}^{n\times n}$ is known as the iteration matrix, $u\in \mathbb{R}^{n}$, and $c\in \mathbb{R}^{n}$. It was shown in  \cite{ghadimi2014optimal} that the convergence factor $\zeta$ of this fixed-point iteration system is equal to $\rho(T)$. Here, the convergence factor $\zeta$ is defined as
\begin{equation*}
	\zeta\triangleq \sup\limits_{k:u^k\neq u^*}\frac{\|u^{k+1}-u^*\|}{\|u^{k}-u^*\|},
\end{equation*}
where $\|\cdot\|$ represents the $L_2$ norm, and $u^{*}$ denotes the optimal solution (i.e., so-called ground truth).  The sequence $\{u_k\}$ is $Q$-sublinear if $\zeta=1$, $Q$-linear if $\zeta<1$, and $Q$-superlinear if $\zeta=0$. Throughout this paper, the letter $Q$ has been omitted when referring to the convergence rate. For linearly convergence sequences with $\zeta\in (0,1)$, if we define $t_{\epsilon}$ as the smallest iteration count to ensure $\|u^{k+1}-u^{*}\|<\varepsilon$ for all $k>t_{\varepsilon}$, then $t_{\varepsilon}$ can be calculated by $\left(\log(\varepsilon)-\log(\sigma)\right)/ \log(\zeta)$, where $\sigma$ denotes the worst case distance between $u^{0}$ and $u^{*}$, i.e., $\|u^{0}-u^{*}\|<\sigma$. This suggests that by reducing the value of the the convergence factor $\zeta$, the iteration count can be decreased, leading to a faster convergence rate. 

\subsection{ADMM for LQPs} 
The LQPs for image processing we study in this paper have the following structure
\begin{equation}\label{QP}
	\min\limits_{u}\frac{\mu}{2}\|Au-f\|^2+\frac{1}{2}\|L u\|^2,
\end{equation}
where $\mu\in\mathbb{R}_{++}$ is the regularization parameter; $A\in \mathbb{R}^{m\times n}$ or $\mathbb{C}^{m\times n} \left(m \leq n\right)$ is an encoding matrix; $u\in \mathbb{R}^{n}$ or  $\mathbb{C}^{n}$ is the unknown vector; $f\in \mathbb{R}^{m}$ or $\mathbb{C}^{m} $ is the input vector; and $L\in \mathbb{R}^{n\times n}$ is a regularization matrix. The value of $\mu$ determines the output quality, whereas smaller values of $\mu$ tend to yield smoother results. By differentiating \eqref{QP} with respect to $u$ and setting the respective derivative to zero, we have the following linear system
\begin{equation}\label{NormalEq}
	\left(\mu A^TA + L^TL\right) u = \mu A^T f.
\end{equation}
When addressing the solution of Equation \eqref{NormalEq}, two primary challenges arise: Firstly, in certain scenarios like our MRI reconstruction and diffeomorphic image registration, where $\left(\mu A^TA + L^TL\right)$ may be positive semi-definite, the process of inverting such a matrix becomes unfeasible. Secondly, in the context of higher-dimensional cases like 3D medical image registration \cite{thorley2021nesterov}, even if the matrix $\left(\mu A^TA + L^TL\right)$ remains positive definite, the process of matrix inversion becomes computationally expensive. To address these two issues, we propose to use ADMM to handle the original problem \eqref{QP}, as an alternative to using the normal equation to solve \eqref{NormalEq}.

To apply ADMM, we introduce an auxiliary variable $w \in \mathbb{R}^{n}$, a Lagrangian multiplier $b \in \mathbb{R}^{n}$, and a penalty parameter $\theta\in\mathbb{R}_{++}$, transforming \eqref{QP} into the following augmented Lagrange function
\begin{equation}\label{ADMM2}
{\cal L}({u,w; b}) = \frac{\mu}{2}\|Au-f\|^2+\frac{1}{2}\|L w\|^2+\frac{\theta}{2}\|w-u-b\|^2.
\end{equation}
To optimize \eqref{ADMM2} with ADMM, we need to decompose it into two sub-problems with respect to $u$ and $w$ and then update the Lagrangian multipliers $b$ until the process converges. The following Algorithm 1 outlines the optimization process using ADMM.
\begin{table}[t]
	\label{GADMM}
	\centering
	\resizebox{0.47\textwidth}{!}{
	\begin{tabular}{l}
		\hline
		\textbf{Algorithm 1:} ADMM for LQPs               \\
		\hline
		\textbf{Input:} matrices $A$ and $L$;   parameter $\mu$ and $\theta$\\
		\textbf{Initialize:} $u^0$ and $b^0$  \\
		\textbf{Repeat:}      \\
		\qquad$w^{k+1}=\text{arg}\min\limits_{w}\frac{1}{2}\|L w\|^2+\frac{\theta}{2}\|w-u^k-b^k\|^2$\\
		\qquad $u^{k+1}=\text{arg}\min\limits_{u}\frac{\mu}{2}\|Au-f\|^2+\frac{\theta}{2}\|w^{k+1}-u-b^k\|^2$       \\
		
		\qquad$b^{k+1}=b^{k}+u^{k+1} - w^{k+1}$\\
		\textbf{until} some stopping criterion is met\\
		\hline
	\end{tabular}}
\end{table}
In Algorithm 1, we have $w^{k+1}=( {L^T}L +\theta I)^{-1} (\theta u^k + \theta b^k)$ and $u^{k+1}=(\mu A^T A+\theta I)^{-1} (\theta w^{k+1} - \theta b^k + \mu A^T f)$. It is worth noting that while matrix inversion is applied to both variables $w^{k+1}$ and $u^{k+1}$, fast solvers exist in specific cases due to the distinctive structure of $A^TA$ and ${L^T}L$. For instance, in diffeomorphic image registration, $A^TA$ takes on a rank-1 form, allowing efficient inversion through the Morris-Sherman equation \cite{bartlett1951inverse,thorley2021nesterov}. Similarly, in MRI reconstruction and diffeomorphic image registration, $L^TL$ can be effectively diagonalized using the discrete Fourier transformation basis functions \cite{goldstein2009split,duan2023arbitrary}. Consequently, the application of ADMM to solve LQPs offers distinct advantages.

\begin{theorem}
In order to determine the optimal penalty parameter $\theta^*$ in ADMM automatically, we need to transform the ADMM iterations in Algorithm 1 into the following fixed-point iteration system, solely with respect to the variable $u$
\begin{equation}\label{u}
	u^{k+1} = \left(I+ Q\right) u^{k} - \left(\mu A^T A+\theta I\right)^{-1}\left(\theta \mu A^T f\right),
\end{equation}
where $I+Q$ is the iteration matrix with $Q$ defined as 
\begin{equation}\label{Q}
	Q = \theta (\mu A^T A+\theta I)^{-1} ( ( {L^T}L +\theta I)^{-1} (\theta I -  \mu A^T A ) - I).
\end{equation}
Next, given a value of $\mu$, we can prove 
\begin{equation}\label{zeta}
	\rho\left(I+Q\right)\leq1,
\end{equation}
regardless of the value of $\theta$. As per Section 3.1, we know that the convergence factor $\zeta$ of Algorithm 1 is equal to the spectral radius of the iteration matrix. As such, $\zeta$ is bounded by 1, meaning Algorithm 1 or \eqref{u} is unconditionally convergent.

\end{theorem}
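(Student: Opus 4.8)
The plan is to treat the two assertions of the theorem separately: first the algebraic reduction of Algorithm 1 to the single-variable recursion \eqref{u}, then the spectral bound \eqref{zeta}. Throughout I will write $N = \mu A^T A + \theta I$ and $M = L^T L + \theta I$ for the two symmetric positive definite matrices inverted in Algorithm 1, so that $w^{k+1} = \theta M^{-1}(u^k + b^k)$ and $u^{k+1} = N^{-1}(\theta w^{k+1} - \theta b^k + \mu A^T f)$. The key observation for the reduction is that the dual variable collapses to an explicit affine function of $u$ after one iteration. The $u$-update is equivalent to the optimality relation $\theta w^{k+1} = N u^{k+1} + \theta b^k - \mu A^T f$; substituting this into the multiplier update $b^{k+1} = b^k + u^{k+1} - w^{k+1}$ makes the $b^k$ terms cancel and leaves
\begin{equation*}
b^{k+1} = \frac{\mu}{\theta} A^T\!\left(f - A u^{k+1}\right),
\end{equation*}
so that for every $k \ge 1$ the vector $b^k$ is a fixed affine image of $u^k$. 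Feeding this relation, together with $w^{k+1} = \theta M^{-1}(u^k + b^k)$, back into the $u$-update eliminates both $w$ and $b$ and produces a recursion of the form \eqref{u}, whose iteration matrix (the coefficient of $u^k$) is $I+Q$. The bracket in \eqref{Q} then simplifies through $M^{-1}(\theta I - \mu A^T A) - I = M^{-1}(\theta I - \mu A^T A - M) = -M^{-1}(\mu A^T A + L^T L)$, giving the compact form $Q = -\theta N^{-1} M^{-1}(\mu A^T A + L^T L)$; since only $I+Q$ governs the convergence factor, I will not track the constant vector further.

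For \eqref{zeta} the eigenvalues of $I+Q$ are in general complex (this already occurs when $A^T A$ and $L^T L$ do not commute), so a symmetric-diagonalization argument is unavailable; instead I will expose the averaged-reflection structure behind ADMM. Writing $I + Q = \frac12\!\left(I + (2Q+I)\right)$, I introduce the reflection matrices $\bar R_F = 2\theta N^{-1} - I = N^{-1}(\theta I - \mu A^T A)$ and $\bar R_G = 2\theta M^{-1} - I = M^{-1}(\theta I - L^T L)$, both real symmetric. The crux is the similarity
\begin{equation*}
N\,(2Q + I)\,N^{-1} = \bar R_G\, \bar R_F,
\end{equation*}
which I will verify by direct computation: conjugation by $N$ turns $2Q+I = I - 2\theta N^{-1}M^{-1}(\mu A^T A + L^T L)$ into $I - 2\theta M^{-1}(\mu A^T A + L^T L)N^{-1}$, and substituting $\mu A^T A + L^T L = M + N - 2\theta I$ collapses this to $4\theta^2 M^{-1}N^{-1} - 2\theta M^{-1} - 2\theta N^{-1} + I = \bar R_G \bar R_F$. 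Hence $2Q+I$ and $\bar R_G \bar R_F$ share the same spectrum.

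It then remains to bound the reflections. Because $\mu A^T A$ and $L^T L$ are positive semidefinite and $\theta > 0$, we have $N, M \succeq \theta I$, so the eigenvalues of $\bar R_F$ are $\frac{\theta - \lambda_i(\mu A^T A)}{\theta + \lambda_i(\mu A^T A)} \in (-1,1]$ and similarly for $\bar R_G$; thus $\|\bar R_F\|_2 \le 1$ and $\|\bar R_G\|_2 \le 1$ in the spectral norm. Consequently every eigenvalue $\nu$ of $\bar R_G \bar R_F$, equivalently of $2Q+I$, satisfies $|\nu| \le \|\bar R_G\|_2\,\|\bar R_F\|_2 \le 1$. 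Each eigenvalue of $I+Q$ is therefore of the form $\frac12(1+\nu)$ with $\left|\frac12(1+\nu)\right| \le \frac12(1 + |\nu|) \le 1$, which yields $\rho(I+Q) \le 1$ for every $\theta \in \mathbb{R}_{++}$; combined with the identity $\zeta = \rho(I+Q)$ from Section 3.1 this gives $\zeta \le 1$ and the unconditional convergence of Algorithm 1.

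The main obstacle, and the step I would spend the most care on, is locating the right vehicle for the spectral bound. The naive route of projecting the eigen-equation $Qv = \eta v$ onto its eigenvector reduces \eqref{zeta} to an inequality of the form $0 \le v^*(A^TA\,L^TL + L^TL\,A^TA)v + \theta(\dots) + 2\theta^2\|v\|^2$ that is false for generic vectors and holds only on genuine eigenvectors, which is awkward to exploit. Recognizing that $I+Q$ is conjugate, via $N$, to the firmly nonexpansive map $\frac12(I + \bar R_G \bar R_F)$ arising from Douglas--Rachford splitting is what makes the argument clean and, crucially, disposes of the complex eigenvalues through submultiplicativity of the spectral norm rather than through any ordering of Hermitian matrices.
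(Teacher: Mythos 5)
Your proof is correct and follows essentially the same route as the paper's Appendix~1: the same elimination of $b$ and $w$ via $b^{k+1}=\frac{\mu}{\theta}A^T(f-Au^{k+1})$, and the same bound $\rho(I+2Q)\le 1$ obtained by rewriting $I+2Q$, after a spectrum-preserving rearrangement (your conjugation by $N$ yields exactly the paper's matrix $S=(L^TL+\theta I)^{-1}(\theta I-L^TL)(\theta I-\mu A^TA)(\mu A^TA+\theta I)^{-1}$), as a product of two Cayley-type factors each of spectral norm at most $1$. The only substantive difference is that your final step $|\tfrac12(1+\nu)|\le\tfrac12(1+|\nu|)$ cleanly covers possibly complex eigenvalues, whereas the paper's passage from $\rho(I+2Q)\le 1$ to $\lambda_i(Q)\in[-1,0]$ tacitly assumes a real spectrum (an issue the paper relegates to a footnote).
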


\begin{proof}
 Detailed derivations proving the equivalence between Algorithm 1 and the fixed-point iteration system \eqref{u}, as well as Inequality \eqref{zeta}, have been provided in  Appendix 1 of the arXiv version of this paper.
\end{proof}

Next, we search the optimal parameter $\theta^*$ that minimizes the convergence rate of Algorithm 1. Since $\zeta$ is dependent on the penalty parameter $\theta$, the objective is to identify a value for $\theta$ that minimizes the convergence factor $\zeta$. For this, we define the following minimization problem
\begin{equation} \label{mintheta}
	\min_\theta \zeta\left(\theta\right),
\end{equation}
where $\zeta\left(\theta\right) = \rho \left(I+ Q(\theta)\right)$. From Inequality \eqref{zeta} in Theorem 1 we have $\lambda_i\left(Q(\theta)\right) \in [-1,0]$, and we can also easily derive $\lambda_i \left(I+ Q(\theta)\right) = 1 +  \lambda_i\left(Q(\theta)\right)$. As such, we have $\rho \left(I+ Q(\theta)\right) = 1 +  \lambda_n\left(Q(\theta)\right)$, with which the minimization problem \eqref{mintheta} can be converted to 
\begin{equation} \label{mintheta_1}
	\min_\theta \lambda_n\left(Q(\theta)\right).
\end{equation}

Though the minimization problem \eqref{mintheta_1} is a one-dimensional optimization problem with respect to only $\theta$, computing $\theta^*$ directly is however not trivial. This is the reason why a general applicable method for optimizing $\theta$ is still lacking. Previous works \cite{ghadimi2014optimal,teixeira2015admm} were based on the assumption that $\lambda_n(Q)$ can be explicitly written for spectral analysis. However, in practical applications such as diffeomorphic registration in Section 4.2, this is a significant limitation. To address this challenge, we propose to use numerical gradient descent to optimize $\theta$
\begin{equation}\label{General}
	\begin{split}
{\theta ^{k + 1}} =  {\theta ^k} - t\nabla \lambda_n\left(Q(\theta^k)\right) ,
 \end{split}
\end{equation}
 where $t$ denotes the step size. In this study, we employed the central finite difference scheme to compute gradients. Compared to the one-sided finite difference method, this scheme offers better numerical stability. It also provides more accurate estimation of gradients. It is important to note that this gradient descent method is general, as it does not need to know the explicit form of the eigenvalues of matrix $Q$. The definition for the central finite difference is given by
\begin{equation*}
	\nabla \lambda_n\left(Q(\theta^k)\right)  \approx \frac{{\lambda_n\left(Q(\theta^k+ \eta)\right) - \lambda_n\left(Q(\theta^k - \eta)\right)}}{2\eta },
\end{equation*}
where $\eta$ represents a small value. In our experiments, we set this value within the range of $10^{-5}$ to $10^{-3}$, which led to a satisfactory convergence of the gradient descent~\eqref{General}.

\subsection{Over-Relaxed ADMM}
Over-relaxation technique can be used in the ADMM algorithm and further accelerate the convergence rate of ADMM. This method is achieved by introducing an additional relaxation parameter $\alpha$ and replacing $w^{k+1}$ in Algorithm 1 with $\alpha w^{k+1}+\left(1-\alpha\right)u^k$. Algorithm 2 outlines  the optimization process of the augmented Lagrange function~\eqref{ADMM2} using over-relaxed ADMM.
\begin{table}[t]
	\label{OR}
	\centering
	\resizebox{0.47\textwidth}{!}{
	\begin{tabular}{l}
		\hline
		\textbf{Algorithm 2:} Over-relaxed ADMM for LQPs                  \\
		\hline
		\textbf{Input:} matrices $A$ and $L$; parameter $\mu$, $\theta$ and $\alpha$ \\
		\textbf{Initialize:} $u^0$ and $b^0$  \\
		\textbf{Repeat:}      \\
		\qquad$w^{k+1}=\text{arg}\min\limits_{w}\frac{1}{2}\|L w\|^2+\frac{\theta}{2}\|w-u^k-b^k\|^2$\\
		\qquad $u^{k+1}=\text{arg}\min\limits_{u}\frac{\mu}{2}\|Au-f\|^2+\frac{\theta}{2}\|\alpha w^{k+1}-\alpha u^k-b^k\|^2$       \\
		\qquad$b^{k+1}=b^{k}+u^{k+1} - \alpha w^{k+1} - \left(1-\alpha\right)u^k$\\
		\textbf{until} some stopping criterion is met\\
		\hline
	\end{tabular}}
\end{table}

To investigate the influence of relaxation parameter $\alpha$ on convergence, we transform Algorithm 2 into its fixed-point iteration system. Such a conversion approach is in line with Proof of Theorem 1 in Appendix. The resulting fixed-point iteration system is given as follows
\begin{equation*}
	u^{k+1} = \left(I+ \alpha Q\right) u^{k}- \alpha\left(\mu A^T A+\theta I\right)^{-1}\left(\theta\mu A^T f\right).
\end{equation*}
After obtaining the iteration matrix $I+ \alpha Q$, we can analyze the spectral radius of this matrix to determine the optimal relaxation parameter $\alpha^*$.

\begin{theorem}
The optimal $\alpha^*$ can be directly calculated using the following closed-form formula
\begin{equation}\label{alpha}
	\alpha^* = -\frac{2}{\lambda_1\left(Q\left(\theta\right)\right)+\lambda_n\left(Q\left(\theta\right)\right)},
\end{equation}
where $Q(\theta)$ is a matrix whose entries reply on the value of $\theta$. As per Equation \eqref{alpha}, we can compute the optimal relaxation parameter $\alpha^*$ as long as a value of $\theta$ is given. 
\end{theorem}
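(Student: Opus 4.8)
The plan is to reduce the minimization of the spectral radius $\rho(I+\alpha Q)$ to a one-dimensional Chebyshev-type min-max problem governed only by the two extreme eigenvalues of $Q$, and then to solve that problem in closed form. First I would note that $I$ commutes with $Q$, so the two matrices are simultaneously triangularizable and the eigenvalues of the iteration matrix $I+\alpha Q$ are exactly $1+\alpha\lambda_i\left(Q\right)$ for $i=1,\dots,n$. By Theorem 1 these eigenvalues are real with $\lambda_i\left(Q\right)\in[-1,0]$; ordering them as $\lambda_1\left(Q\right)\le\cdots\le\lambda_n\left(Q\right)\le 0$, the convergence factor becomes $\zeta\left(\alpha\right)=\rho\left(I+\alpha Q\right)=\max_i\left|1+\alpha\lambda_i\left(Q\right)\right|$.

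Next I would collapse the dependence on all $n$ eigenvalues to the two extremes. For fixed $\alpha>0$ the map $\lambda\mapsto\left|1+\alpha\lambda\right|$ is convex (a V-shaped function of $\lambda$), so its maximum over the interval $\left[\lambda_1\left(Q\right),\lambda_n\left(Q\right)\right]$ is attained at an endpoint. Hence $\zeta\left(\alpha\right)=\max\left(\left|1+\alpha\lambda_1\left(Q\right)\right|,\left|1+\alpha\lambda_n\left(Q\right)\right|\right)$, a single-variable function depending only on $\lambda_1\left(Q\right)$ and $\lambda_n\left(Q\right)$. Since this is the pointwise maximum of two convex piecewise-linear functions of $\alpha$, it is itself convex, and its minimizer is the kink at which the decreasing branch meets the increasing branch, characterized by the balancing condition in which the two inner expressions are equal in magnitude but opposite in sign, namely $1+\alpha\lambda_1\left(Q\right)=-\left(1+\alpha\lambda_n\left(Q\right)\right)$. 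Solving this linear equation gives $\alpha^*=-2/\left(\lambda_1\left(Q\right)+\lambda_n\left(Q\right)\right)$, which is exactly Equation \eqref{alpha}.

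Finally I would verify optimality by substituting $\alpha^*$ back. A short computation yields $1+\alpha^*\lambda_n\left(Q\right)=\left(\lambda_1\left(Q\right)-\lambda_n\left(Q\right)\right)/\left(\lambda_1\left(Q\right)+\lambda_n\left(Q\right)\right)\ge 0$ and $1+\alpha^*\lambda_1\left(Q\right)=\left(\lambda_n\left(Q\right)-\lambda_1\left(Q\right)\right)/\left(\lambda_1\left(Q\right)+\lambda_n\left(Q\right)\right)\le 0$, so the two branches are indeed of opposite sign and equal magnitude at $\alpha^*$, confirming it is the true minimizer and not a spurious crossing. The same expressions show $\zeta\left(\alpha^*\right)=\left(\lambda_n\left(Q\right)-\lambda_1\left(Q\right)\right)/\left(-\lambda_1\left(Q\right)-\lambda_n\left(Q\right)\right)<1$ whenever $\lambda_n\left(Q\right)<0$, quantifying the gain over non-relaxed ADMM.

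The main obstacle I anticipate lies in the second step together with the sign bookkeeping of the third: one must rigorously justify that the interior eigenvalues never dominate the two extremes and that $\alpha^*$ lands in the regime $1+\alpha^*\lambda_1\left(Q\right)\le 0\le 1+\alpha^*\lambda_n\left(Q\right)$, so the absolute values are resolved correctly and the balancing point is genuinely interior to the admissible range of $\alpha$. A further subtlety is the degenerate case $\lambda_n\left(Q\right)=0$ (singular $Q$), where $1+\alpha\lambda_n\left(Q\right)\equiv 1$ forces $\rho\ge 1$ for every $\alpha$ and over-relaxation cannot reduce the spectral radius below one; this case must be excluded or handled separately, with $\lambda_n\left(Q\right)$ reinterpreted as the largest eigenvalue relevant to convergence.
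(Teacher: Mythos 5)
Your proposal is correct and follows essentially the same route as the paper's proof: reduce $\rho(I+\alpha Q)$ to $\max\left(|1+\alpha\lambda_1(Q)|,\,|1+\alpha\lambda_n(Q)|\right)$ using $\lambda_i(Q)\in[-1,0]$ from Theorem~1, then find the balance point where the two linear branches intersect. The only difference is presentational — you justify the reduction to the extreme eigenvalues via convexity and check the sign bookkeeping explicitly, whereas the paper reads these facts off the piecewise-linear picture in Figure~\ref{fig:1}.
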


\begin{figure}[t!]
\centering
\includegraphics[width=0.85\linewidth]{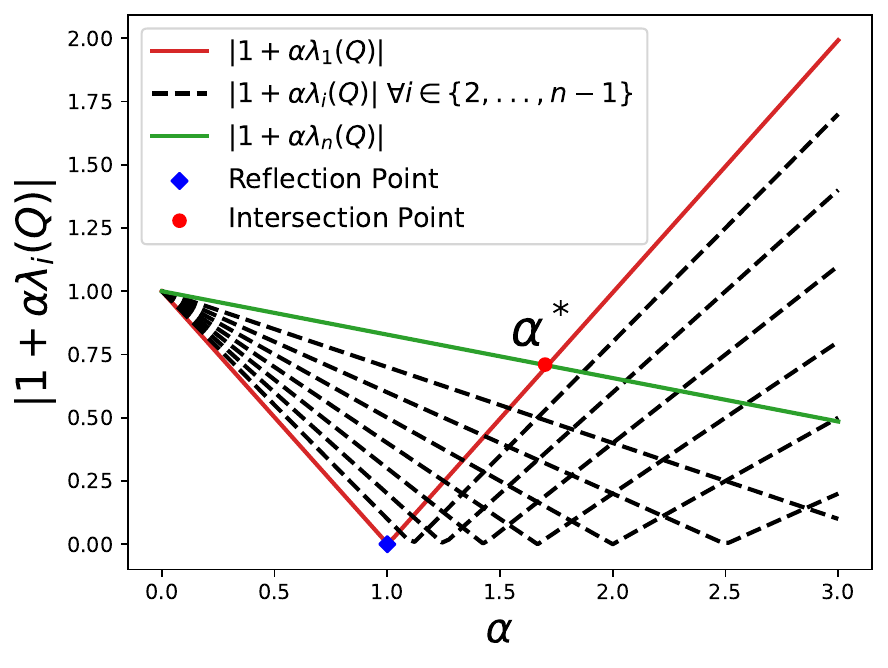}
\vspace{-5pt}
\caption{Relationship between $|1 + \alpha \lambda_i(Q)|$ and the value of $\alpha$. The slope of each line before reflection is $\lambda_i\left(Q\right)$. The spectral radius before the intersection point is governed by the green line, while after the reflection, it is determined by the reflected red line. The intersection point corresponds to the optimal $\alpha^*$ as well as the minimum spectral radius of the iteration matrix $I + \alpha Q
$.}
\vspace{-5pt}
\label{fig:1}
\end{figure}

\begin{proof} To prove Theorem 2, we begin with the following two-dimensional joint optimization problem
\begin{equation}\label{joint}
	\min_{\theta, \alpha} \zeta\left(\theta, \alpha \right),
\end{equation}
where $\zeta\left(\theta, \alpha \right)=\rho(I+\alpha Q(\theta))$. In order to express the spectral radius in terms of the eigenvalue structure, we first derive the equality $ \lambda_i \left(I+\alpha Q(\theta)\right) = 1 + \alpha \lambda_i\left(Q(\theta)\right)$, and the spectral radius $\rho(I+\alpha Q(\theta))$ is then defined as 
\begin{equation}\label{rho1}
  \max_i |1 + \alpha \lambda_i\left(Q(\theta)\right)|,\; \forall i \in \{1,...,n\}. 
\end{equation}

From Inequality \eqref{zeta} in Theorem 1, we know $\lambda_i\left(Q(\theta)\right) \in [-1,0]$. Based on this and \eqref{rho1}, we plot Figure~\ref{fig:1} to demonstrate the correlation between the absolute eigenvalue of the iteration matrix and the relaxation parameter. From this figure, it is straightforward to express the spectral radius as the following piecewise function
\begin{equation}\label{rho2}
	\rho= 
	\begin{cases}
		1+\alpha \lambda_n (Q), & \text{if}\;-1 - \alpha \lambda_1(Q)\leq 1 + \alpha \lambda_n(Q)\\
		-1-\alpha \lambda_1 (Q), & \text{if}\;-1 - \alpha \lambda_1(Q) > 1 + \alpha \lambda_n(Q)
	\end{cases},
\end{equation}
where with a slight abuse of notation, we use $\rho $ to represent $\rho(I+\alpha Q(\theta))$. 

Once we have \eqref{rho2}, our objective is to minimize it in order to enhance the convergence rate. From Figure~\ref{fig:1} again, it becomes evident that the smallest spectral radius is located at the intersection point where the following equality holds 
\begin{equation*}
    -1 - \alpha \lambda_1(Q) = 1 + \alpha \lambda_n(Q),
\end{equation*}
from which $\alpha$ can be computed using a closed-form solution as follows
\begin{equation*}
  \alpha = -\frac{2}{\lambda_1\left(Q\left(\theta\right)\right)+\lambda_n\left(Q\left(\theta\right)\right)},
\end{equation*}
which exactly verifies the validity of Equation \eqref{alpha}. 
\end{proof}

If now we plug the optimal $\alpha^*$ into \eqref{rho2}, we can convert the joint minimization problem \eqref{joint} into the following minimization problem
 \begin{equation}\label{theta3}
 	\min_{\theta, \alpha} \zeta\left(\theta, \alpha \right) \Rightarrow \min_{\theta} \frac{\lambda_1 (Q\left(\theta\right))-\lambda_n (Q\left(\theta\right))}{\lambda_1\left(Q\left(\theta\right)\right)+\lambda_n\left(Q\left(\theta\right)\right)},
 \end{equation}
which is a single-variable optimization problem with respect to only $\theta$. This problem can be minimized with a numerical gradient descent method similar to Equation~\eqref{General}. Once $\theta^*$ is found, $\alpha^*$ can be computed using the closed-form solution~\eqref{alpha}. It is worth noting that even if $\theta$ is not optimal, $\alpha$ computed via \eqref{alpha} can still accelerate convergence.

\section{Experiments}
In this section,  we will first test the generalization ability of our proposed parameter selection method through random instantiations. Following that, we will apply the proposed parameter selection methods to diffeomorphic image registration, image deblurring, and MRI reconstruction. We will compare our optimal ADMM algorithm and over-relaxed variant (oADMM) with gradient descent (GD), gradient descent with Nesterov's acceleration (GD-N) \cite{nesterov1983method,bartlett2021accelerated}, gradient descent with Nesterov's acceleration and restart (GD-NR) \cite{o2015adaptive,bartlett2021accelerated}, as well as conjugate gradient (CG). In all of our experiments, we chose the step size in gradient-based methods using the Lipschitz constant of the corresponding problem. It is worth noting that optimal values for penalty parameters can be determined analytically for image deblurring and MRI reconstruction problems. However, for image registration numerical gradient descent is required to compute these parameters.

\subsection{Generalization Ability}
Emphasizing that our approach is model-driven, the selection of parameters\footnote{The parameter selection also relies on the regularization parameter $\mu$ which we fix as a constant in this paper.} relies on the matrices $A$ and $L$ in the minimization problem \eqref{QP}. As such, the measure of generalization ability lies in how effectively our method performs as $A$ and $L$ undergo variations, which is in contrast to data-driven methods, where the generalization ability is often examined using multiple different datasets.

We presented Figure~\ref{fig:c} to demonstrate the generalization ability of our approach, where the analysis is based on 50 random instantiations of $A\in\mathbb{R}^{200\times 50}$ and $L\in\mathbb{R}^{200\times 50}$ while keeping $f$ and $\mu$ fixed. For ADMM, we employed numerical gradient descent to minimize \eqref{mintheta_1} with respect to $\theta$. For oADMM, we utilized numerical gradient descent to minimize \eqref{theta3} with regard to $\theta$, whilst the optimal value of $\alpha$ was calculated using \eqref{alpha} once $\theta^*$ was found. We note that the optimal values of $\theta$ for both ADMM and oADMM are similar and that the optimal values of $\alpha$ are not within $[1.5,1.8]$ as suggested in \cite{eckstein1994parallel}. As evident from Figure~\ref{fig:c}, the calculated optimal values consistently result in faster convergence rates for both ADMM and oADMM, reaffirming the generalization ability of our proposed parameter selection methods.

\begin{figure}[h!]
\centering
\includegraphics[width=1\linewidth]{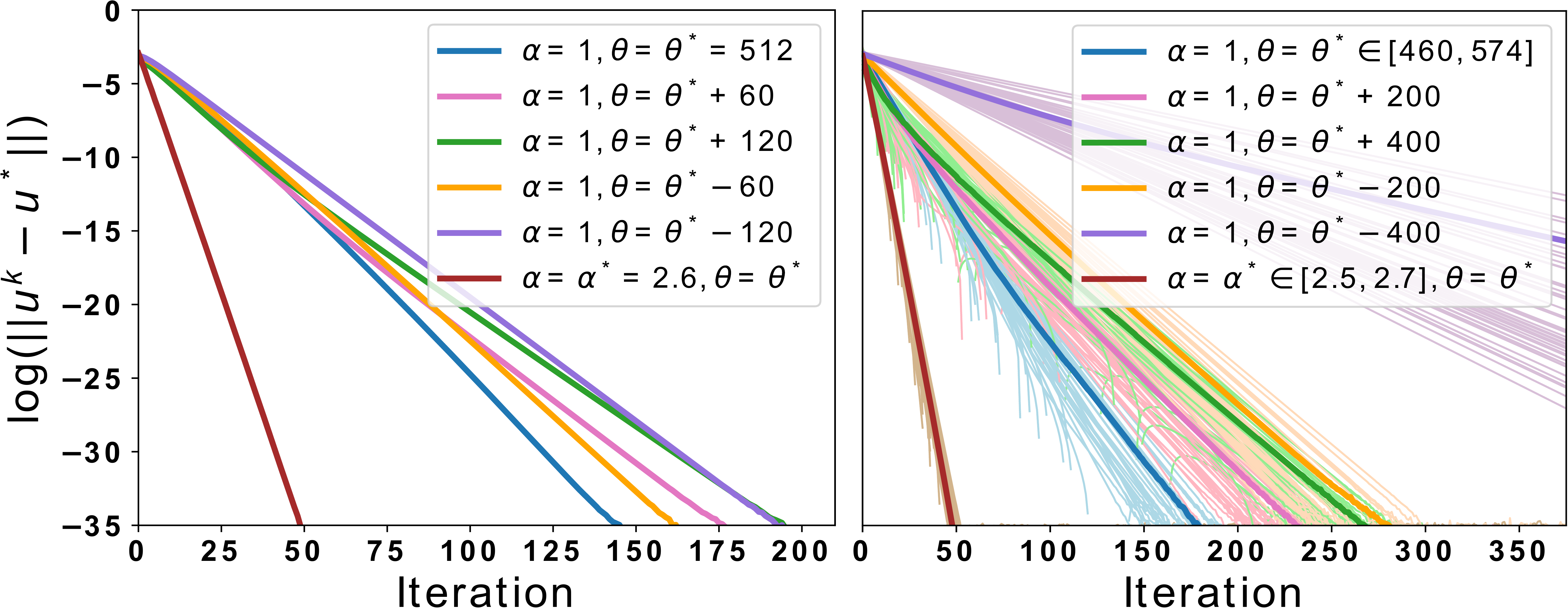}
\vspace{-17pt}
\caption{Left: Convergence rates of different methods and parameter values based on 1 random instantiation of $A$ and $L$. Right: Convergence rates based on 50 random instantiations of $A$ and $L$. The solid lines represent the average over 50 instantiations. The algorithm is ADMM when $\alpha=1$, and oADMM when $\alpha = \alpha^*$. }
\vspace{-5pt}
\label{fig:c}
\end{figure}

\subsection{Diffeomorphic Image Registration} 
Computing a diffeomorphic deformation can be treated as modelling a dynamical system \cite{beg2005computing}, given by an ordinary differential equation (ODE): $\partial{\bf{\phi}}/\partial t  = {\bf{v}}_t({\bf{\phi}}_t)$, where ${\bf{\phi}}_0 = {\rm{Id}}$ is the identity transformation and ${\bf{v}}_t$ indicates the velocity field at time $t$ ($\in [0,1]$).  The ODE can be solved by Euler integration, in which the deformation field $\phi$ is calculated as the compositions of a series of small deformations, defined as $\phi=({\rm{Id}}+\frac{{\bf{v}}_{t_{N-1}}}{N})
\circ \cdots \circ ({\rm{Id}}+\frac{{\bf{v}}_{t_{1}}}{N})\circ ({\rm{Id}}+\frac{{\bf{v}}_{t_0}}{N})$. If the velocity fields ${\bf{v}}_{t_{i}}$ are sufficiently small whilst satisfying some smoothness constraints, the resulting composition is a diffeomorphic deformation. 
    
To compute the velocity fields whilst satisfying these diffeomorphic constraints, we minimize the following linear quadratic problem \cite{thorley2021nesterov}
\begin{equation} \label{eq:OF}
\min_{v_x, v_y} \frac{\mu}{2} \| \langle  I_x, {v_x} \rangle + \langle I_y, {v_y} \rangle + I_t   \|^2 + \frac{1}{2} \|\nabla v_x\|^2 + \frac{1}{2}  \|\nabla v_y\|^2,
\end{equation}
where $I_x, I_y\in \mathbb{R}^{n}$ denote the spatial derivatives of the image; $I_t\in \mathbb{R}^{n}$ represents the temporal derivative of the image; and $v_x, v_y \in \mathbb{R}^{n}$ denote the velocity field in $x$ and $y$ directions. In this case, by setting 
$$A=\begin{pmatrix}
\textup{diag}(\langle I_x, I_x \rangle ) & \textup{diag}(\langle I_x, I_y \rangle )\\ 
\textup{diag}(\langle I_y, I_x \rangle ) & \textup{diag}(\langle I_y, I_y \rangle )
\end{pmatrix} \in \mathbb{R}^{2n\times2n} $$
and
$$L=\begin{pmatrix}
\nabla ^T\nabla  & 0\\ 
0 & \nabla ^T\nabla  
\end{pmatrix}\in \mathbb{R}^{2n\times2n},$$
we can use numerical gradient descent to compute optimal parameters for both ADMM and oADMM.

In Figure~\ref{fig:4}, we show results obtained through the introduced diffeomorphic registration technique. We examine the impact of the penalty parameter $\theta$ in both ADMM and oADMM, and then evaluate the convergence efficiency of different algorithms. Given a pair of images (depicted as source and target in the figure), we can compute a deformation (shown in the bottom left panel) that ensures a positive Jacobian determinant (shown in the bottom middle panel) for all pixel positions. In the top right panel, we show the correlation between the spectral radius of the iteration matrix and $\theta$ in both ADMM and oADMM. As can be seen, there exists an unique optimal value where the spectral radius is minimized. As such, when using numerical gradient descent, it is possible to find the optimal value of $\theta$ that can considerably reduce iteration counts. This panel also illustrates that $\theta^*$, producing the smallest spectral radius for oADMM, closely aligns with that of ADMM. Furthermore, due to the two-loop\footnote{We did not use the pyramid implementation as in \cite{thorley2021nesterov}, so we ended up with a two-loop algorithm comprising inner ADMM/oADMM iterations and outer warping iterations.} iterative nature of diffeomorphic image registration, the data term of $\eqref{eq:OF}$ undergoes slight changes at each iteration of the outer loop. These changes however do not significantly influence the value of $\theta^*$, as evident from the top right panel. Therefore, given a specific value of $\mu$, it is sufficient to use gradient descent to search $\theta^*$ for each outer iteration. Finally, in the bottom right panel, convergence rates among different algorithms are compared. As is evident, the parameter-optimized oADMM algorithm remains the fastest in terms of convergence rate.
\begin{figure}[t]
	\centering
   \includegraphics[width=0.98\linewidth]{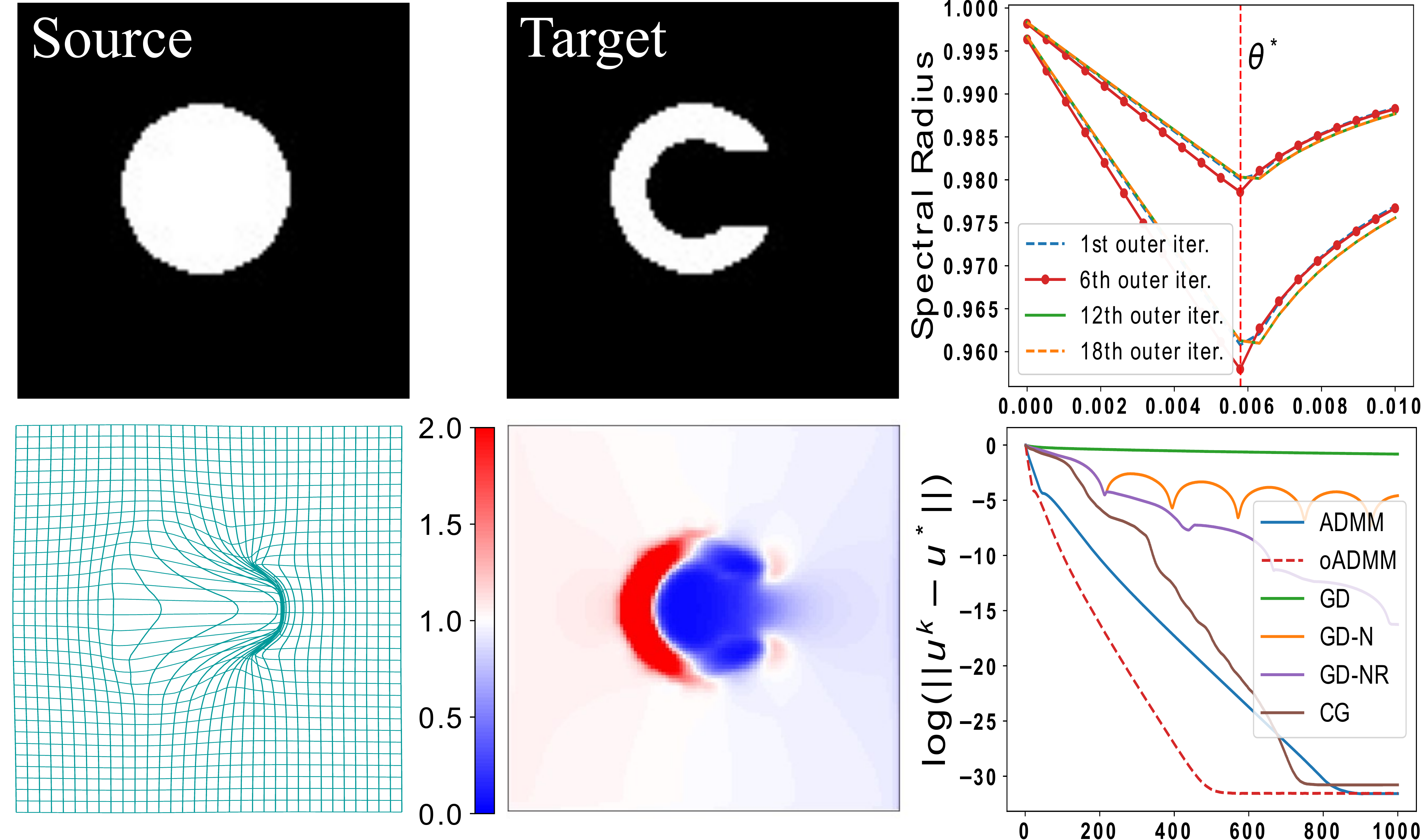}
	\vspace{-5pt}
	\caption{ Illustration of diffeomorphic image registration results, visualization of the correlation between spectral radius and $\theta$, and comparison of convergence rates of algorithms. The $x$-axes of the two plots in the third column represent the values of $\theta$ and iteration numbers, respectively.}
	\label{fig:4}
 \vspace{-10pt}
\end{figure}

\subsection{Image Deblurring}
In this application, we look at a phantom test image. The image went through a Gaussian blur of size $7\times7$ and standard deviation 2, followed by an additive zero-mean white Gaussian noise with standard deviation $10^{-4}$. The top left and middle panels of Figure~\ref{fig:2} depict the original and blurred images, respectively. To deblur the image we minimize the following problem
\begin{equation}\label{deblurring2}
	\min\limits_{u}\frac{\mu}{2}\|K u-f\|^2+\frac{1}{2}\|u\|^2,
\end{equation}
where $K \in \mathcal{S}^{n\times n}$ is the matrix representing the blur operator, $u\in \mathbb{R}^{n}$ is the vectorized unknown clean image, and $f\in \mathbb{R}^{n}$ is the vectorized input image.  By setting $A=K$ and $L=I$, the matrix $Q$ in \eqref{Q} for this application has the form of  
\begin{equation*}%\label{iterdeblurring}
	Q=\theta (\mu K^T K+\theta I)^{-1} (( I +\theta I)^{-1} (\theta I -  \mu K^T K )- I).
\end{equation*}

Since $K$ is a convolution matrix derived from the Gaussian kernel function, the eigenvalues of $K^T K$ can be calculated using the two-dimensional discrete Fourier transform \cite{capus2003fractional}. With $\lambda\left(K^T K\right)$, we can derive the maximum eigenvalues of $Q$ as
\begin{equation} \label{app1Q}
	\lambda_n(Q) =- \frac{\theta +\theta\mu \lambda_i(K^T K) }{\theta^2+\mu \lambda_i(K^T K) +\theta +\theta\mu \lambda_i(K^T K)},
\end{equation}
where $ i$ is either $1$ or $n$. Since $\lambda_n (Q)$ in this case can be explicitly written, we can derive closed-form solutions for the parameters in ADMM and over-relaxed ADMM. In Theorem 3, we give their optimal parameters.

\begin{theorem}
Firstly, to tackle the optimization problem \eqref{deblurring2} using ADMM, given a regularization parameter $\mu\in \mathbb{R}_{++}$, the optimal value of the penalty parameter in ADMM can be expressed in closed form
\begin{equation*}
	\theta^*=
	\begin{cases}
		\sqrt{\mu},\quad & {\rm{if}}\;\mu \leq 1 \\
		1,\quad & \text{otherwise}
	\end{cases},
\end{equation*}
which was derived by minimizing the value of $\lambda_n (Q)$ in \eqref{app1Q}. 

If over-relaxed ADMM is used to tackle the optimization problem \eqref{deblurring2}, the optimal penalty and relaxation parameters are given by
\begin{equation*}
	\theta^*=1 \; \rm{and} \; \alpha^*=2,
\end{equation*}
among which $\theta^*$ was determined by minimizing the problem \eqref{theta3} with $\lambda_1(Q)$ and $\lambda_n(Q)$ defined in \eqref{app1Q}, and then $\alpha^*$ was computed using \eqref{alpha} with $\theta^*$.
\end{theorem}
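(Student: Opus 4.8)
The plan is to reduce the whole theorem to a one-variable calculus problem in $\theta$ by first simplifying the eigenvalue expression \eqref{app1Q}. Writing $x = \mu\lambda_i(K^TK) \geq 0$ for a generic eigenvalue of $K^TK$, I would factor the denominator in \eqref{app1Q} as $\theta^2 + \theta + x(1+\theta) = (\theta+1)(\theta+x)$, so that every eigenvalue of $Q$ takes the compact form $g(x;\theta) = -\theta(1+x)/[(\theta+1)(\theta+x)]$. This single rational function, evaluated over the spectrum $\{\mu\lambda_i(K^TK)\}$, encodes all eigenvalues of $Q$, and the rest of the argument concerns its behaviour in $x$ and $\theta$.

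Next I would locate the extremal eigenvalues $\lambda_1(Q)$ and $\lambda_n(Q)$. Differentiating $g$ in $x$ gives $\partial g/\partial x \propto (\theta-1)$ up to a positive factor, so $g$ is monotone in $x$ with direction dictated solely by the sign of $\theta-1$: decreasing for $\theta>1$, increasing for $\theta<1$, and identically $-1/2$ at $\theta=1$. Consequently the extreme eigenvalues of $Q$ are always attained at the extreme eigenvalues of $K^TK$, which justifies restricting to $i\in\{1,n\}$ in \eqref{app1Q}; here I would use that a normalized Gaussian blur satisfies $\lambda_n(K^TK)=1$ (the preserved DC mode) and $\lambda_1(K^TK)\approx 0$ (strongly attenuated high frequencies). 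This makes $G(\theta):=\lambda_n(Q)$ a piecewise rational function, equal to $-\theta(1+\mu)/[(\theta+1)(\theta+\mu)]$ for $\theta\le 1$ and to $-1/(\theta+1)$ for $\theta\ge 1$, continuous at $\theta=1$ with value $-1/2$.

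For the ADMM claim I would minimize $G$ on each branch. On $\theta\ge 1$, $G$ is increasing, so its minimum there is $-1/2$ at $\theta=1$. On $\theta\le 1$, a short derivative computation yields a unique interior critical point at $\theta=\sqrt{\mu}$, where $G(\sqrt\mu)=-(1+\mu)/(1+\sqrt\mu)^2$. The comparison $G(\sqrt\mu)\le -1/2$ reduces to $(1-\sqrt\mu)^2\ge 0$, so whenever $\sqrt\mu\le 1$ the interior minimizer $\theta^*=\sqrt\mu$ beats the boundary value, while for $\mu>1$ the critical point leaves the branch and the global minimum falls back to $\theta^*=1$; this reproduces the stated two-case formula.

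For the over-relaxed claim the decisive observation is that at $\theta=1$ the matrix $Q$ collapses to $-\tfrac12 I$: substituting $\theta=1$ and $L=I$ directly into \eqref{Q} gives $Q=(\mu K^TK+I)^{-1}\big(\tfrac12(I-\mu K^TK)-I\big)=-\tfrac12 I$. Hence $\lambda_1(Q)=\lambda_n(Q)=-1/2$, so the objective in \eqref{theta3}, which is nonnegative because $\lambda_1\le\lambda_n\le 0$ and vanishes exactly when the two extreme eigenvalues coincide, attains its global minimum $0$ at $\theta^*=1$; substituting into \eqref{alpha} yields $\alpha^*=-2/(\lambda_1+\lambda_n)=2$, and indeed $I+\alpha^*Q=0$, so the relaxed iteration contracts in a single step. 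The main obstacle I anticipate is the piecewise nature of $\lambda_n(Q)$ in the ADMM part: since the ``active'' eigenvalue of $K^TK$ switches at $\theta=1$, one cannot minimize a single smooth formula but must treat the two branches separately and compare their minima, together with pinning down the extreme eigenvalues $\lambda_1(K^TK)=0$ and $\lambda_n(K^TK)=1$ on which the clean answer $\theta^*=\sqrt\mu$ rests.
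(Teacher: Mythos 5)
Your proof is correct. For the ADMM half you follow essentially the paper's route: pin down $\lambda_1(K^TK)\approx 0$ and $\lambda_n(K^TK)=1$, use the monotonicity of the generic eigenvalue in $\mu\lambda_i(K^TK)$ to get the piecewise form of $\lambda_n(Q)$, minimize each branch ($\theta=1$ giving $-1/2$ versus $\theta=\sqrt{\mu}$ giving $-(1+\mu)/(1+\sqrt{\mu})^2$), and compare. Your factorization of the denominator as $(\theta+1)(\theta+x)$ is a clean simplification the paper does not exploit, and your case split is in fact handled more carefully than the paper's: the paper selects between the two candidates via the inequality $-(1+\mu)/(2\sqrt{\mu}+1+\mu)\le -1/2$, which reduces to $(1-\sqrt{\mu})^2\ge 0$ and therefore holds for every $\mu$; the genuine reason $\theta^*=1$ when $\mu>1$ is, as you say, that the critical point $\sqrt{\mu}$ exits the branch $\theta\le 1$, on which the function is then monotone down to the value $-1/2$ at $\theta=1$. (One wording slip: $\partial g/\partial x=-\theta(\theta-1)/[(\theta+1)(\theta+x)^2]$ is proportional to $\theta-1$ up to a \emph{negative} factor, not a positive one; the monotonicity directions you actually state and use are the correct ones.) For the over-relaxed half you take a genuinely different and shorter route: the paper computes the objective $(\lambda_1-\lambda_n)/(\lambda_1+\lambda_n)$ explicitly on each branch as $\pm(\theta-1)\mu/(2\theta+\mu+\theta\mu)$ and checks nonnegativity with equality at $\theta=1$, whereas you observe that at $\theta=1$ the matrix $Q$ collapses to $-\tfrac{1}{2}I$, so the nonnegative objective vanishes there and $\theta^*=1$, $\alpha^*=2$ follow at once. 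Your argument buys a structural explanation of why $I+\alpha^*Q=0$, i.e., the one-step convergence the paper only notes empirically in a footnote, at the cost of relying on the exact identity $L=I$ rather than only the two extreme eigenvalues.
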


\begin{proof}
Detailed derivations have been given in  Appendix 2 of the arXiv version of this paper. 
\end{proof}

\begin{figure}[t]
	\centering
        \includegraphics[width=1.0\linewidth]{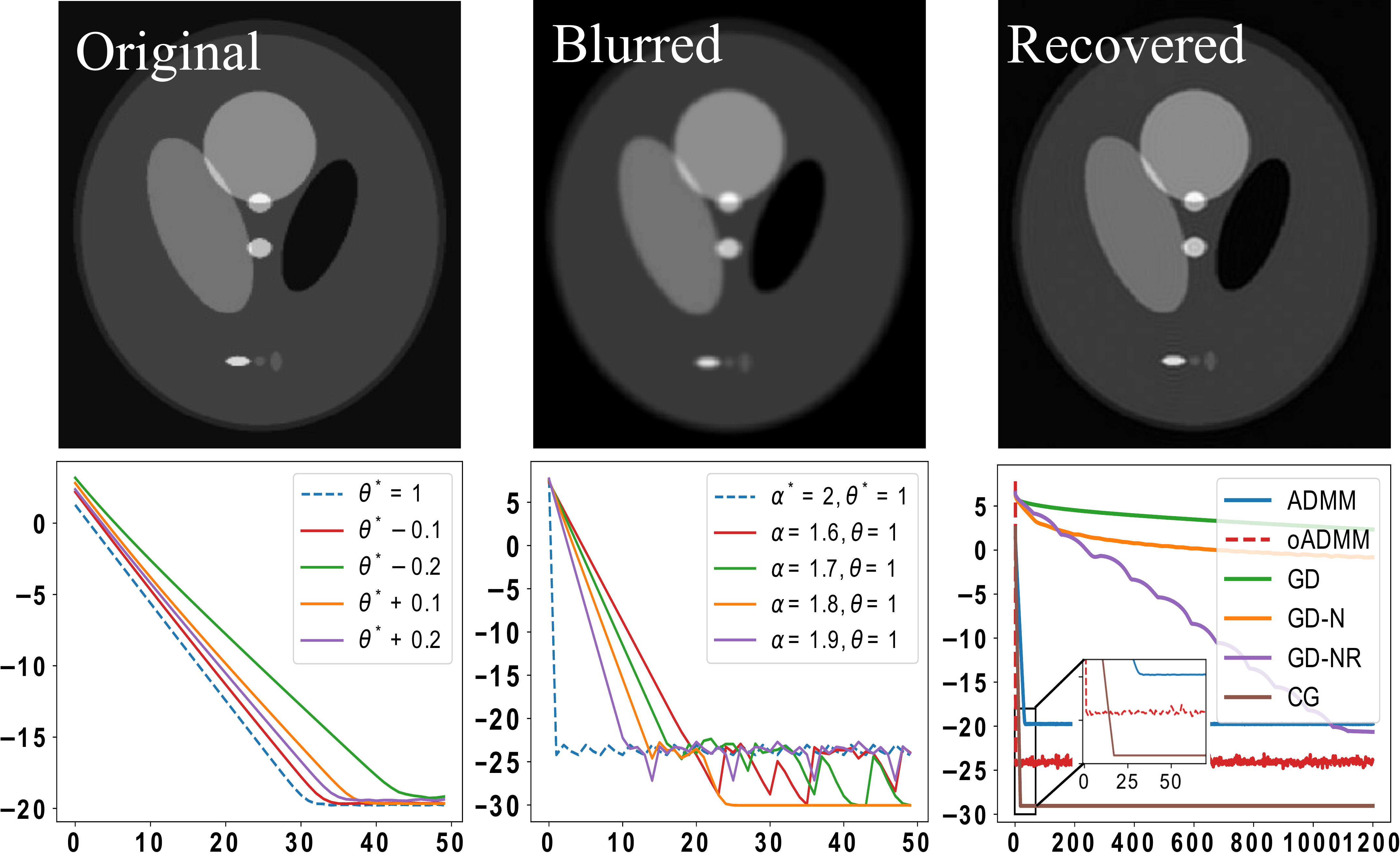}
        \vspace{-15pt}
	\caption{{Demonstration of image deblurring effects and convergence rates of different algorithms. The $x$-axis and $y$-axis of each plot in the second row represent iteration numbers and ${\rm{log}}(\|u^k - u^*\|)$, respectively. } }
	\label{fig:2}
 \vspace{-10pt}
\end{figure} 

In Figure~\ref{fig:2}, the top right panel displays the deblurred image from oADMM (comparable results were achieved with ADMM), which closely resembles the original image. Note that we set the regularization parameter $\mu$ to $10^3$ for this experiment. The bottom left and middle panels demonstrate that with the optimal $\theta^*$ and $\alpha^*$ there is a clear enhancement over the model's convergence, and that the optimal values of $\theta$ for both ADMM and oADMM are the same in this case. The bottom right panel shows that ADMM, CG, and oADMM exhibit superior performance than GD, GD-N, and GD-NR. Upon a detailed examination from the zoomed-in window, CG addresses this quadratic problem very well, albeit still needing multiple iterations to attain convergence. In contrast, oADMM achieves convergence in a single step\footnote{The spectral radius in this case is close to zero, leading to a superlinear convergence rate.}, outperforming all compared algorithms. 

\subsection{MRI Reconstruction}  
To reconstruct MR images we minimize the problem
\begin{equation}\label{MRIt2}
	\min\limits_{u}\frac{\mu}{2}\|\mathcal{D} \mathcal{F}u-f\|^2+\frac{1}{2}\|\nabla u\|^2,
\end{equation}
where $\mathcal{D}\in\mathbb{R}^{m\times n} \left(m<n\right)$ is the sampling matrix; $\mathcal{F}\in\mathbb{C}^{n\times n}$ is the Fourier transform matrix; $u\in\mathbb{C}^{n}$ is a complex-valued MR image stacked as a column vector; $f\in\mathbb{C}^{m}$  is the undersampled $k$-space data; $\nabla$ denotes the first-order gradient operator.
By setting $A=\mathcal{D} \mathcal{F}$ and $L=\nabla$, the matrix $Q$ in \eqref{Q} for this application has the form of
\begin{equation}\label{iterMRI}
	 Q=\theta(\mu M_1 +\theta I)^{-1} ( ( {\nabla^T}\nabla +\theta I)^{-1} (\theta I - \mu M_1  ) - I),
\end{equation}
where $M_1=\mathcal{F}^T \mathcal{D}^T \mathcal{D} \mathcal{F}$. Due to the use of periodic boundary conditions, $\nabla^T\nabla$ can be efficiently diagonalized in the form of $\mathcal{F}^T G \mathcal{F}$, where $G$ is a diagonal matrix. Equation \eqref{iterMRI} can be simplified to $\mathcal{F}^T M_2 \mathcal{F}$, where $M_2$ is given as
\begin{equation*}\label{iterdeblurring}
	 M_2 = \theta(\mu \mathcal{D}^T \mathcal{D}+\theta I)^{-1} ( ( G +\theta I)^{-1} (\theta I -  \mu \mathcal{D}^T \mathcal{D} ) - I),
\end{equation*}

\noindent which is a diagonal matrix. The eigenvalues of $\mathcal{F}^T M_2 \mathcal{F}$ are simply the values along the diagonal of $M_2$. If we define $\lambda_i(G)$ as the $i$th smallest eigenvalue of $G$, and $d_i$ as the diagonal value of $\mathcal{D}^T \mathcal{D}$ at the position where $\lambda_i(G)$ is indexed from $G$, the maximum eigenvalues of $Q$ can be derived as 
\begin{equation*}
	\lambda_n(Q) = -\frac{\theta \lambda_i(G)+\theta\mu d_i}{\theta^2+\mu d_i \lambda_i(G)+\theta \lambda_i(G)+\theta\mu d_i},
\end{equation*}

where $i \in \{1,...,n\}$. Since $\lambda_n (Q)$ can be written explicitly, we can derive the closed-form solution for $\theta$ in ADMM. In Theorem 3, we present the optimal value for this parameter. If over-relaxed ADMM is used to solve \eqref{MRIt2}, a closed-form solution still exists for $\theta$. It is however too cumbersome to derive them in this case. As such, the penalty parameter $\theta$ in over-relaxed ADMM was searched by gradient descent, and once $\theta^*$ was found the optimal relaxation parameter $\alpha^*$ can be directly obtained using Equation \eqref{alpha} with $\theta^*$.

\begin{theorem}
To tackle the optimization problem \eqref{MRIt2} using ADMM, given a regularization parameter $\mu\in \mathbb{R}_{++}$, the optimal value of the penalty parameter in ADMM can be expressed in closed form
\begin{equation*}
	\theta^*=
	\begin{cases}
		\sqrt{\mu a},\quad & \text{if}\quad  \mu \leq 2b- a\\
		\sqrt{\frac{\mu a b}{\mu+a-b}},\quad & \text{if}\quad  2b - a <\mu \leq a \\
		\mu,\quad & \text{if}\quad a < \mu \leq b\\
		\sqrt{\frac{\mu c b}{\mu+c-b}},\quad & \text{otherwise} 	
	\end{cases},
\end{equation*}
where $a$, $b$ and $c$ are defined as follows
\begin{equation*}
    \begin{split}
    a &= \lambda_1((\mathcal{D}^T \mathcal{D}) \odot G)\\
    b &= \lambda_1( (1 -\mathcal{D}^T \mathcal{D}) \odot G),\\
    c &= \lambda_x( (\mathcal{D}^T \mathcal{D} )\odot G)\\ 
\end{split}
\end{equation*}
where $\odot$ is the hadamard product; $\lambda_1((\mathcal{D}^T \mathcal{D}) \odot G)$ denotes the smallest eigenvalue of $(\mathcal{D}^T \mathcal{D}) \odot G$, excluding the eigenvalues corresponding to zero entries on the diagonal of $\mathcal{D}^T \mathcal{D}$; 
$\lambda_1( (1 -\mathcal{D}^T \mathcal{D}) \odot G)$ denotes the smallest eigenvalue of $(1- \mathcal{D}^T \mathcal{D}) \odot G$, excluding the eigenvalues corresponding to zero entries on the diagonal $1-\mathcal{D}^T \mathcal{D}$; and
$\lambda_x((\mathcal{D}^T \mathcal{D}) \odot G)$ represents the largest eigenvalue of $(\mathcal{D}^T \mathcal{D}) \odot G$, excluding the eigenvalues corresponding to zero entries along the diagonal of $\mathcal{D}^T \mathcal{D}$.
\end{theorem}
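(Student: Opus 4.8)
The plan is to turn the spectral-radius minimization into an explicit one-dimensional min-max problem in $\theta$, exploit the binary structure of the sampling mask to reduce the inner maximum to an envelope of only two curves, and then minimize that envelope by a case analysis. By Theorem 1 and the discussion following it, $\rho(I+Q)=1+\lambda_n(Q)$ with every eigenvalue of $Q$ lying in $[-1,0]$, so the optimal $\theta^*$ is the minimizer of $\lambda_n(Q)=\max_i q_i(\theta)$, where $q_i(\theta)$ is the $i$th diagonal entry of $M_2$. The first step is to use that $\mathcal{D}^T\mathcal{D}$ is diagonal with entries $d_i\in\{0,1\}$, which partitions the entries into two families. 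Factoring the denominator as $(\theta+\lambda_i(G))(\theta+\mu)$, the unsampled entries ($d_i=0$) reduce to $q_i=-\lambda_i(G)/(\theta+\lambda_i(G))$ and the sampled entries ($d_i=1$) reduce to $q_i=-\theta(\lambda_i(G)+\mu)/[(\theta+\lambda_i(G))(\theta+\mu)]$.

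Next I would collapse the maximum over $i$ within each family. Since $-g/(\theta+g)$ is decreasing in $g$, the $d_i=0$ family is maximized at the smallest admissible eigenvalue, namely $b$, giving the increasing curve $q_0(\theta)=-b/(\theta+b)$ that rises from $-1$ to $0$. For the $d_i=1$ family, differentiating in $g$ shows the derivative has the sign of $\mu-\theta$; hence the maximum is attained at the largest admissible eigenvalue $c$ when $\theta<\mu$ and at the smallest one $a$ when $\theta>\mu$. This produces a single valley-shaped curve $q_1(\theta)$, equal to the $c$-expression on $(0,\mu]$ and the $a$-expression on $[\mu,\infty)$, vanishing at both ends and equal to $-1/2$ at the kink $\theta=\mu$. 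Thus $\lambda_n(Q)=\max(q_0,q_1)$, and a short calculation locates the valley bottom of $q_1$ at $\theta=\sqrt{a\mu}$ when $\mu\le a$, at the kink $\theta=\mu$ when $a\le\mu\le c$, and at $\theta=\sqrt{c\mu}$ when $\mu\ge c$.

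The optimum of $\max(q_0,q_1)$ is then one of two things, dictated by the monotone-versus-valley geometry: the valley bottom of $q_1$, when $q_0$ lies below $q_1$ there, or else the unique point on the descending branch where $q_0=q_1$. Solving $q_0(\theta)=q_1(\theta)$ is the clean algebraic core: clearing denominators yields a quadratic whose linear terms cancel, leaving $\theta^2=ab\mu/(\mu+a-b)$ on the $a$-branch and $\theta^2=cb\mu/(\mu+c-b)$ on the $c$-branch. Comparing $q_0$ with the valley value reduces the valley-versus-crossing decision to sign conditions; in particular, on the $a$-branch the comparison collapses (after cross multiplying) to the sign of $2b-a-\mu$, which produces the threshold $\mu\le 2b-a$, while the branch selection itself is governed by the sign of $b-\mu$ evaluated at the kink. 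Assembling these yields the four reported formulas, and I would finish by verifying continuity across the thresholds, e.g. the $a$-branch crossing formula collapses to $\sqrt{\mu a}$ at $\mu=2b-a$ (since then $\mu+a-b=b$) and the $c$-branch crossing formula collapses to $\mu$ at $\mu=b$.

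The main obstacle will be the bookkeeping of this case analysis rather than any single difficult estimate. I must track three nested comparisons at once — the location of the valley (set by $\mu$ relative to $a$ and $c$), whether the relevant crossing sits on the $a$-branch or the $c$-branch (set by $\mu$ relative to $b$), and whether the valley bottom or the crossing is optimal (set by the $2b-a$ threshold) — and confirm for each admissible ordering of $a$, $b$, $c$ that the candidate lies on the branch for which its formula was derived, that denominators such as $\mu+a-b$ stay positive, and, crucially, that the identified point is a global rather than merely local minimizer of the non-smooth envelope $\max(q_0,q_1)$.
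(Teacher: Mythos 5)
Your proposal is correct and follows essentially the same route as the paper's Appendix 3: diagonalize $Q$ via the Fourier basis, split the diagonal entries by the binary mask into the two families, collapse each family's maximum to the extremal eigenvalues $a$, $b$, $c$ (your monotonicity-in-$g$ argument with sign $\mu-\theta$ is exactly how the paper selects between the $a$- and $c$-branches), and then minimize the two-curve envelope by comparing the valley bottom against the crossing points $\sqrt{\mu ab/(\mu+a-b)}$ and $\sqrt{\mu cb/(\mu+c-b)}$, with the same $2b-a$ threshold and the same $\theta=\mu$ kink analysis. The only cosmetic difference is that you phrase the comparison geometrically as $\min_\theta \max(q_0,q_1)$ while the paper studies the sign of $F=x-y$ case by case; the algebra is identical.
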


\begin{proof}
Detailed derivations have been given in Appendix 3 of the arXiv version of this paper. 
\end{proof}

In Figure~\ref{fig:MRI}, we reconstruct a cardiac MR image from $k$-space. The original image (displayed in the top left panel) was first transformed into $k$-space using the Fourier transformation. Then $50\%$ of the data there was taken using a cartesian sampling mask, displayed in the original image. This undersampled data was then corrupted by an additive zero-mean white Gaussian noise with standard deviation $1$ to form $f$ in \eqref{MRIt2}. The reconstruction (top right), despite some slight blurring due to the smooth regularization, clearly enhances image quality compared to that displayed in the top middle panel, which is a direct reconstruction of $f$ using the inverse Fourier transformation. The bottom left and middle panels of this figure illustrate that the choice of $\theta$ and $\alpha$ has a significant impact on the convergence rate and that our proposed methods result in faster convergence. Thanks to the utilization of these optimal parameters, we observe a clear superiority of our ADMM and oADMM over GD, its accelerated variants, and CG in terms of convergence efficiency.

\begin{figure}[t]
\centering
\includegraphics[width=0.97\linewidth]{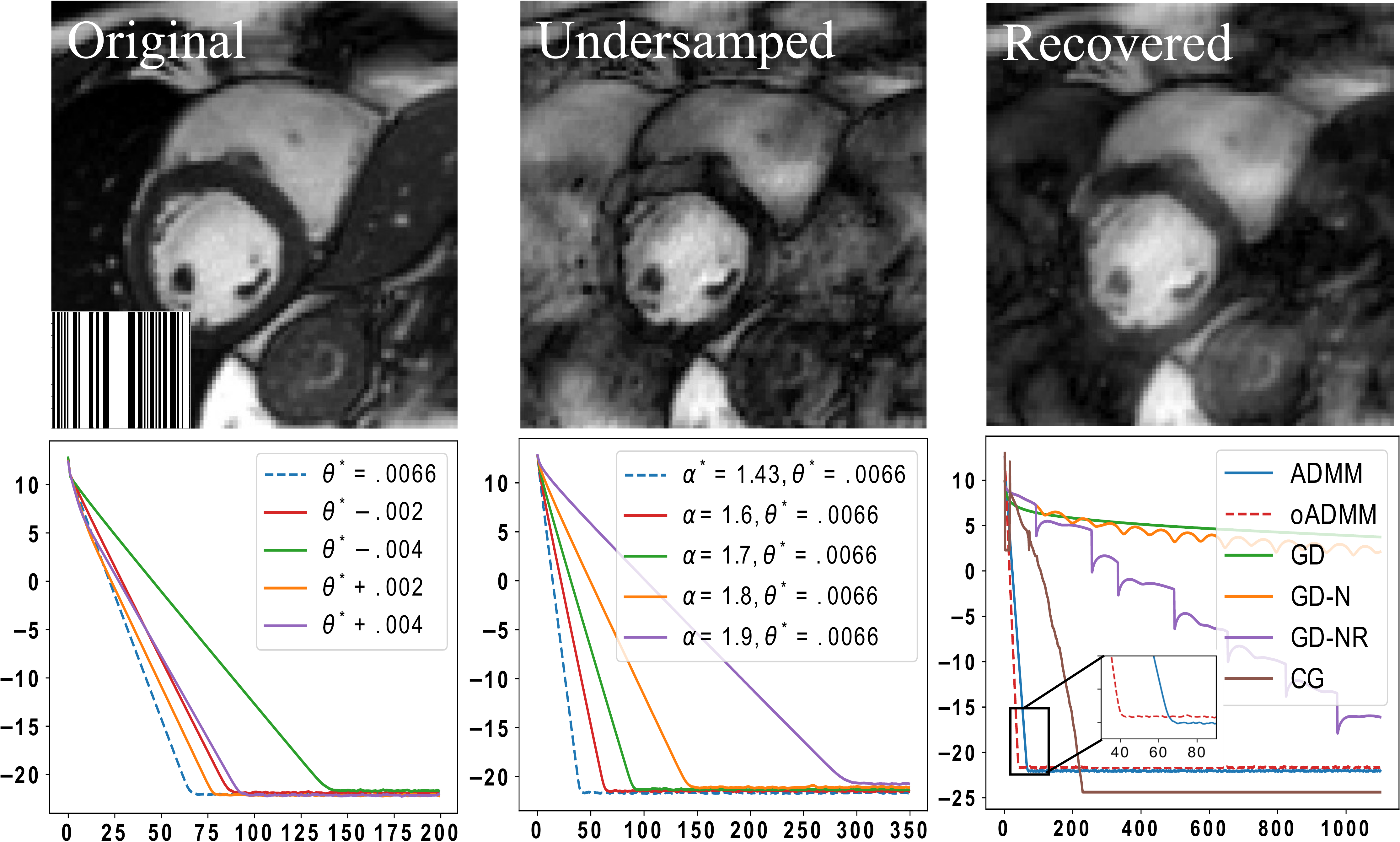}
\vspace{-7pt}
\caption{{Demonstration of MRI reconstruction results and comparison of convergence rates among algorithms. The $x$-axis and $y$-axis of each plot in the second row represent iteration numbers and ${\rm{log}}(\|u^k - u^*\|)$, respectively.} }
\label{fig:MRI}
\vspace{-13pt}
\end{figure}

\section{Conclusion}
In this paper, we presented automated techniques for selecting optimal penalty and relaxation parameters within the framework of ADMM and over-relaxed ADMM for linear quadratic problems. Our approaches involve a numerical gradient descent method for estimating the penalty parameter and a novel closed-form solution for determining the optimal relaxation parameter. We verified the generalizability and efficacy of these approaches through random instantiations and real-world imaging applications.

\subsection*{Appendix 1}
In Algorithm 1 of ADMM, the $u$-update, $w$-update, and $b$-update are respectively provided as follows
\begin{equation} \label{eq:ADMMw}
{w^{k + 1}} = {\left( { {L^T}L + \theta I} \right)}^{ - 1}\left( {\theta {u^k} + \theta {b^k}} \right),
\end{equation}
\begin{equation} \label{eq:ADMMu}
{u^{k + 1}} = {\left( {\mu{A^T}A + \theta I} \right)}^{ - 1}\left( {\theta {w^{k + 1}} - \theta {b^k} + \mu{A^T}f} \right),
\end{equation}
\begin{equation} \label{eq:ADMMb}
	{b^{k + 1}} = {b^k} + {u^{k + 1}} - {w^{k + 1}}.
\end{equation}
From Equation \eqref{eq:ADMMu} we obtain
\begin{equation*}
	{b^k}={w^{k + 1}}-u^{k + 1}- \frac{{\mu{A^T}A  }  u^{k + 1}}{\theta}+ \frac{{\mu{A^T}f}}{\theta },
\end{equation*}
with which Equation \eqref{eq:ADMMb} can be re-written as follows
\begin{equation*} %\label{eq:ADMMb3}
	{b^{k + 1}} =  - \frac{{\mu{A^T}A}}{\theta }{u^{k + 1}} + \frac{{\mu{A^T}f}}{\theta },
\end{equation*}
which implies 
\begin{equation} \label{eq:ADMMb3}
	{b^{k}} =  - \frac{{\mu{A^T}A}}{\theta }{u^{k}} + \frac{{\mu{A^T}f}}{\theta }.
\end{equation}

By replacing $b^k$ in \eqref{eq:ADMMw} with \eqref{eq:ADMMb3}, $w$-update in \eqref{eq:ADMMw} can be re-written as
\begin{equation} \label{eq:ADMMw1}
w^{k + 1} = {( { {L^T}L + \theta I})^{ - 1}}( {( {\theta I - \mu{A^T}A}){u^k} - \mu{A^T}f}).
\end{equation}
Substituting $\eqref{eq:ADMMw1}$, along with \eqref{eq:ADMMb3}, into $\eqref{eq:ADMMu}$ yields the fixed iteration system below, solely with respect to the variable $u$
\begin{equation}\label{fixitu}
	u^{k+1} = \left(I+ Q\right) u^{k} - \left(\mu A^T A+\theta I\right)^{-1}\left(\theta \mu A^T f\right),
\end{equation}
where $I+Q$ is the iteration matrix\footnote{The iteration matrix may not be a symmetric matrix, so in theory its eigenvalues could be complex. However, in our applications we did not observe such an effect.} with $Q$ defined as
\begin{equation*}
    Q = \theta (\mu A^T A+\theta I)^{-1} ( ( {L^T}L +\theta I)^{-1} (\theta I -  \mu A^T A ) - I),
\end{equation*}
or equivalently
\begin{equation}\label{iterQ}
    Q = - (\mu A^T A + \theta I)^{-1} (L^T L + \theta I)^{-1} ( \theta \mu A^T A + \theta L^T L).
\end{equation}
The iteration matrix $I+Q$ can be derived as
\begin{equation}\label{iterIQ}
    I+Q = (\mu A^T A + \theta I)^{-1} (L^T L + \theta I)^{-1} ( \theta^2 I + \mu L^T L A^T A).
\end{equation}

So far, we have successfully transformed the ADMM iterations in Algorithm 1 into the fixed-point iterations \eqref{fixitu}. Next, we need to analyze the spectral radius of the iteration matrix to establish the convergence of ADMM. Recall that for the matrix $Q$ and its corresponding eigenvalues $\lambda's$, we define $\lambda_i(Q)$ as the $i$th smallest eigenvalue of $Q$, $\rho(Q)$ as the spectral radius of $Q$, and $\| \cdot \|$ as the spectral norm.

First, by adding \eqref{iterQ} and \eqref{iterIQ}, we obtain
\begin{equation*}
\begin{split}
    I+2Q =  (\mu A^T A + \theta I)^{-1} (L^T L + \theta I)^{-1} \\
   ( \theta^2 I + \mu L^T L A^T A-\theta L^T L - \theta \mu  A^T A),
\end{split}
\end{equation*}
which is equivalent to
\begin{equation*}
\begin{split}
   I+2Q & = (\mu A^T A + \theta I)^{-1} (L^T L + \theta I)^{-1} \\
   & ( \theta I-L^T L)( \theta I-\mu A^T A).
\end{split}
\end{equation*}

In theory, altering the sequence of matrix multiplication will yield a matrix that has the same eigenvalues to the original one. In this case, we define the matrix $S$ below
\begin{equation*}
    S =  (L^T L + \theta I)^{-1}( \theta I-L^T L)
   ( \theta I-\mu A^T A)(\mu A^T A + \theta I)^{-1},
\end{equation*}
which has the same eigenvalues as $I+2Q$. Now, if we analyze the spectral norm of $S$, we can have the following two inequalities
\begin{equation*}
\begin{split}
    \|S_1\| & =\| (L^T L + \theta I)^{-1}( \theta I-L^T L)\| \\
            & =\max_i\left|\frac{\theta-\lambda_i (L^T L)}{\theta+\lambda_i (L^T L)}\right| \leq 1, \forall i \in \{1,...,n\}.
\end{split}
\end{equation*}
\begin{equation*}
\begin{split}
		\|S_2\| & = \| ( \theta I-\mu A^T A)(\mu A^T A + \theta I)^{-1} \| \\
                    & = \max_i\left|\frac{\theta-\lambda_i (\mu A^T A)}{\theta+\lambda_i (\mu A^T A)}\right|\leq 1, \forall i \in \{1,...,n\}.
  \end{split}
\end{equation*}
Combining such two inequalities, we can draw the conclusion that
\begin{equation*}
		\rho(I + 2Q)=\rho(S)\leq \|S\|\leq\|S_1\| \|S_2\|=1.
\end{equation*}
As such, the following inequality holds
\begin{equation*}
		-1\leq 1+2\lambda_i(Q)\leq 1, \;\forall i \in \{1,...,n\},
\end{equation*}
which means 
\begin{equation*}
		\lambda_i (Q)\in [-1,0], \; \forall i \in \{1,...,n\},
\end{equation*}
and 
\begin{equation*}
    \rho(I+Q)\leq 1.
\end{equation*}
This essentially proves that the spectral radius of the iteration matrix is bounded by 1, thereby allowing us to conclude that the ADMM or Algorithm 1 for linear quadratic problems converges unconditionally.

 \subsection*{Appendix 2}
Given a value of the regularization parameter $\mu$, the maximum eigenvalue of $Q$ is of the following form
\begin{equation*}
	\lambda_n(Q(\theta)) =- \frac{\theta +\theta\mu \lambda_i(K^T K) }{\theta^2+\mu \lambda_i(K^T K) +\theta +\theta\mu \lambda_i(K^T K)}.
\end{equation*}

First, we derive $\theta^*$ in ADMM. It is evident that $\lambda_n(Q)$ increases monotonically as $i$ in $\lambda_i(K^T K)$ increases when $\theta\geq 1$, but decreases monotonically as $i$ in $\lambda_i(K^T K)$ increases when $\theta< 1$. Hence, we only need to discuss these two scenarios separately. Due to the fact that $\lambda_{1} (K^T K)$ close to 0 and $\lambda_n (K^T K)=1$, $\lambda_n (Q)$ can be re-written in the piecewise function as follows
  \begin{equation*}
  	\lambda_n(Q(\theta))=
 	\begin{cases}
  		-\dfrac{\theta+\theta\mu}{\theta^2+\mu+\theta +\mu\theta}\quad & \text{if}\;\theta < 1 \\
  		\qquad-\dfrac{1 }{\theta+1}\quad & \text{if}\;\theta\geq 1 
  	\end{cases}.
  \end{equation*}
Now we need to minimize $\lambda_n(Q(\theta))$ with respect to $\theta$. When $\theta\geq 1$, $\lambda_n(Q(\theta))$ increases monotonically when $\theta$ increases. The optimal penalty parameter in this case that minimizes $\lambda_n(Q)$ is given by
\begin{equation*}
\theta^*=1,\; \lambda_n (Q(\theta^*))\approx -\frac{1}{2}.
\end{equation*}
When $\theta<1$, it is straightforward to know
\begin{equation*}\label{Q2}
	\theta^*=\sqrt{\mu},\; \lambda_n \left(Q(\theta^*)\right)=-\frac{1+\mu }{2 \sqrt{\mu}+1 +\mu }.
\end{equation*}
So far, we have computed the optimal values of $\theta^*$ for the two cases. The next step is to determine which of the two cases has a smaller function value (e.g., smaller spectral radius). As such, we have 
\begin{equation*}
	\theta^*=
	\begin{cases}
		\sqrt{\mu},\quad & \text{if}\;-\frac{1+\mu }{2 \sqrt{\mu}+1 +\mu } \leq -\frac{1}{2} \\
		1,\quad & \text{otherwise}
	\end{cases},
\end{equation*}
which is equivalent to the following form
\begin{equation*}
	\theta^*=
	\begin{cases}
		\sqrt{\mu},\quad & \text{if}\;\mu \leq 1 \\
		1,\quad & \text{otherwise}
	\end{cases}.
\end{equation*}

Next, we optimize the parameters in over-relaxed ADMM. When $\theta\geq 1$, $\lambda_1(Q(\theta))$ and $\lambda_n(Q(\theta))$ are respectively defined as
\begin{equation*}
	\lambda_1(Q(\theta)) =- \frac{\theta +\theta\mu  }{\theta^2+\mu +\theta +\theta\mu },\; \lambda_n(Q(\theta))=-\frac{1}{\theta+1}.
\end{equation*}
In this case, the minimization problem (14) in the main paper can be converted to
\begin{equation*}
	\min_{\theta} \frac{\lambda_1 (Q\left(\theta\right))-\lambda_n (Q\left(\theta\right))}{\lambda_1\left(Q\left(\theta\right)\right)+\lambda_n\left(Q\left(\theta\right)\right)} \Rightarrow  \min_{\theta} \frac{(\theta -1)\mu}{2\theta+\mu+\theta\mu}.
\end{equation*}
When $\theta\geq 1$, the objective function value of the converted problem is greater or equal to zero. In such a case, the minimum of the objective function is achieved when $\theta^* = 1$. The optimal value of $\alpha^*$ is computed by plugging $\theta^* = 1$ into Equation (10) in the main paper, resulting in 
\begin{equation*}
	\alpha^* = -\frac{2}{\lambda_1\left(Q\left(\theta^*\right)\right)+\lambda_n\left(Q\left(\theta^*\right)\right)}=2.
\end{equation*}

Similarly, when $\theta\leq1$ , $\lambda_1(Q)$ and $\lambda_n(Q)$ are respectively defined as follows
\begin{equation*}
	\lambda_1(Q(\theta)) =-\frac{1}{\theta+1},\; \lambda_n(Q(\theta))=- \frac{\theta +\theta\mu  }{\theta^2+\mu +\theta +\theta\mu }.
\end{equation*}
In this case, the minimization problem (14) in the main paper can be converted to
\begin{equation*}
	\min_{\theta} \frac{\lambda_1 (Q\left(\theta\right))-\lambda_n (Q\left(\theta\right))}{\lambda_1\left(Q\left(\theta\right)\right)+\lambda_n\left(Q\left(\theta\right)\right)} \Rightarrow \min_{\theta} \frac{(1-\theta )\mu}{2\theta+\mu+\theta\mu}.
\end{equation*}
When $\theta \leq 1$, the objective function value of the converted problem is also greater or equal to zero. This implies that  $\theta^*= 1$ and the corresponding optimal over-relaxation parameters $\alpha^*=2$. As such, for both cases we have the optimal penalty and over-relaxation parameters given as follows
\begin{equation*}
	\theta^*=1 \; \rm{and} \; \alpha^*=2,
\end{equation*}
which verifies the validity of the closed-form solutions in Theorem 3.

 \subsection*{Appendix 3}
 We start from the following equation
 \begin{equation} \label{m2}
     \lambda_n(Q)  = \lambda_n({\cal F^T} M_2 {\cal F}) = \lambda_n(M_2),
 \end{equation}
where $M_2$ is a diagonal matrix defined as
 \begin{equation*}\label{iterdeblurring}
	 M_2=\theta(\mu \mathcal{D}^T \mathcal{D}+\theta I)^{-1} ( ( G +\theta I)^{-1} (\theta I -  \mu \mathcal{D}^T \mathcal{D} ) - I).
\end{equation*}
If we define a vector $v \in \mathbb{R}^n$, each element of which corresponds to a diagonal entry of $M_2$, we have
\begin{equation*}\label{iterdeblurring}
\begin{split}
	 v_i&=\theta \frac{1}{\mu(\mathcal{D}^T \mathcal{D})_{ii}+\theta}  \left(\frac{\theta -  \mu (\mathcal{D}^T \mathcal{D})_{ii} }{ G_{ii} +\theta }   - 1\right)\\
         &=-\frac{\theta G_{ii}+\theta\mu (\mathcal{D}^T \mathcal{D})_{ii}}{\theta^2+\mu (\mathcal{D}^T \mathcal{D})_{ii} G_{ii}+\theta G_{ii}+\theta\mu (\mathcal{D}^T \mathcal{D})_{ii}}\\
         &=  \begin{cases} 
            -\frac{\theta G_{ii}}{\theta^2+\theta G_{ii}}& \text{if}\;(\mathcal{D}^T \mathcal{D})_{ii}= 0\\
		-\frac{\theta G_{ii}+\theta\mu }{\theta^2+\mu G_{ii}+\theta G_{ii}+\theta\mu } & \text{if}\;(\mathcal{D}^T \mathcal{D})_{ii}= 1
	\end{cases}.
 \end{split}
\end{equation*}
According to Equation \eqref{m2}, the spectral radius of $Q$ is equivalent to the maximum value found on the diagonal of $M_2$, so we have
\begin{multline} \label{maxv}
    \lambda_n(Q)   = \lambda_n(M_2) = \max_i v_i   = \\ 
  \max_i \left \{ -\frac{\theta G_{ii}}{\theta^2+\theta G_{ii}},  -\frac{\theta G_{ii}+\theta\mu }{\theta^2+\mu G_{ii}+\theta G_{ii}+\theta\mu } \right\},
\end{multline}
subject to the constraints with respect to the binary matrix $\mathcal{D}^T \mathcal{D}$. We need to divide the maximization problem \eqref{maxv} into two separate maximization problems, as follows
\begin{equation} \label{max1}
\max_i \left \{ -\frac{ G_{ii}}{\theta+G_{ii}}\right\}  \; s.t. \; (\mathcal{D}^T\mathcal{D})_{ii} = 0, 
\end{equation}
and
\begin{equation} \label{max2}
\max_i \left \{ -\frac{\theta G_{ii}+\theta\mu }{\theta^2+\mu G_{ii}+\theta G_{ii}+\theta\mu }  \right\}  \; s.t. \; (\mathcal{D}^T\mathcal{D})_{ii} = 1.
\end{equation}

If we now introduce four new variables
\begin{equation*}
a = \arg \min_i G_{ii} \; s.t. \; (\mathcal{D}^T\mathcal{D})_{ii} = 1,
\end{equation*}
\begin{equation*}
b = \arg \min_i G_{ii} \; s.t. \; (\mathcal{D}^T\mathcal{D})_{ii} = 0,
\end{equation*}
\begin{equation*}
c = \arg \max_i G_{ii} \; s.t. \; (\mathcal{D}^T\mathcal{D})_{ii} = 1,
\end{equation*}
and 
\begin{equation*}
d = \arg \max_i G_{ii} \; s.t. \; (\mathcal{D}^T\mathcal{D})_{ii} = 0,
\end{equation*}
then \eqref{max1} and \eqref{max2} can be converted into the following two equivalent maximization problems
\begin{equation} \label{max1_}
    \max \left \{ -\frac{ b}{\theta+ b}, - \frac{d}{\theta + d}\right\} ,
\end{equation}
and
\begin{equation} \label{max2_}
    \max \left \{ -\frac{\theta a +\theta\mu }{\theta^2+\mu a+\theta a +\theta\mu }, -\frac{\theta c +\theta\mu }{\theta^2+\mu c+\theta c +\theta\mu }  \right\}.
\end{equation}
If we denote the solution for the maximization \eqref{max1_} as $x$, then we have
\begin{equation*} \label{x}
    x =  \max \left \{ -\frac{ b}{\theta+ b}, - \frac{d}{\theta + d}\right\}  =  -\frac{ b}{\theta+ b}. 
\end{equation*}
If we define the solution for the maximization \eqref{max2_} as $y$, then we have
\begin{equation*} \label{y}
    y = \begin{cases}
  		-\dfrac{\theta a+\mu\theta}{\theta^2+\mu a+\theta a+\mu\theta}\quad & \text{if}\;\theta> \mu    \\
  		-\dfrac{\theta c+\mu\theta}{\theta^2+\mu c+\theta c+\mu\theta}\quad & \text{if}\;\theta<\mu \\
            \qquad \quad -1/2\quad & \text{if}\;\theta=\mu \\
  	\end{cases}.
\end{equation*}
Next, we must compare $x$ and $y$ to determine identify the larger one as $\lambda_n(Q)$, and subsequently minimize it with respect to $\theta$. This results in the following discussions. 

First, we consider the case when $\;\theta>\mu\; \text{and}\;  \mu<a$. In this case, we have 
  % \begin{equation*}
  % 	y=-\dfrac{\theta a+\mu\theta}{\theta^2+\mu a+\theta a+\mu\theta},
  % \end{equation*}
% which is constrained by $\theta>\mu$. To analyze the relationship between $x$ and $y$, we will use $F$ to represent $x-y$, and the expression for $F$ is as follows
\begin{equation*}
F \triangleq x-y =\frac{\left(a + \mu - b\right)\theta^2 - \mu ab}{(\theta^2+\mu a+\theta a+\mu\theta)(\theta+b)}.
\end{equation*}
The denominator of $F$ is always greater than zero, so we only need to consider the sign of the numerator. In the case when $\mu\leq b-a$, $F <0 $, then we have $y > x$, leading to $\theta^*=\sqrt{\mu a}$. In the case when $\mu> b-a$, $F = 0$ holds only when $\theta$ satisfies the following equation
\begin{equation*}
\theta = \sqrt{\frac{\mu a b}{\mu+a-b}}.
\end{equation*}
If $\theta\geq e_1$ where $e_1$ denotes $\sqrt{\mu a b/(\mu+a-b)}$, we have $x\geq y$, then $\theta^*=e_1$. If $\theta < e_1$, then $x < y$. In this case, we should compute $\theta^*$ from $y$. If $e_1\geq \sqrt{\mu a}$, which is equivalent to $\mu\leq 2b-a$, then $\theta^*=\sqrt{\mu a}$. If $e_1 < \sqrt{\mu a}$, $\theta^* = e_1$. Consequently, we can conclude 
  \begin{equation}\label{theta1}
	\theta^*=
	\begin{cases}
            \sqrt{\mu a},\quad & \text{if}\;\mu\leq 2b-a  \\
		e_1,\quad & \text{if}\;2b-a<\mu\leq a 
	\end{cases}.
\end{equation}

Next, we consider the case when $\theta<\mu\; \text{and} \;\mu>c  $. In this case, we have 
%  in this case $y$ can be expressed as
%   \begin{equation*}
%   	y=-\dfrac{\theta c+\mu\theta}{\theta^2+\mu c+\theta c+\mu\theta}.
%   \end{equation*}
% which is constrained by $\theta<\mu$, where the function $F$ is replaced by
\begin{equation*}
  	F = x-y =\frac{\left(c + \mu - b\right)\theta^2 - \mu cb}{(\theta^2+\mu c+\theta c+\mu\theta)(\theta+b)},
  \end{equation*}
where $c \geqslant b $. $F$ in this case increases monotonically in $\theta$. Moreover, $F=0$ holds only when $\theta$ satisfies the following equation
 \begin{equation*}
 	\theta = \sqrt{\frac{\mu c b}{\mu+c-b}}.
 \end{equation*}
As $c \geqslant b $, it is easy to know $e_2 < \sqrt{\mu c}$ in which $e_2$ denotes $\sqrt{\mu cb/(\mu+c-b)}$. In this case, when $\theta \leq e_2$, $y$ monotonically decreases in $\theta$. As such, $y$ achieves its minimum when $\theta^*=e_2$. This implies that when $\mu> c$, the optimal value $x$ and $y$ are exactly the same, which reads
\begin{equation}\label{theta2}
\theta^* =e_2,\;\text{if} \;\mu>c.
\end{equation}

Lastly, we consider the case when $\theta=\mu$ and $a\leq \mu \leq c$. In this case, we have 
\begin{equation*}
F \triangleq x-y =\frac{\theta-b}{2(\theta+b)}.
\end{equation*}
It is easy to know if $\theta>b$, then $x > y$. If $\theta\leq b$, then $y$ is the largest eigenvalue. In this case, the optimal $\theta^*$ is given as follows
 \begin{equation}\label{theta3}
	\theta^*=
	\begin{cases}
		\mu,\quad & \text{if}\;a< \mu \leq b \\
            e_2,\quad & \text{if}\;b< \mu \leq c
	\end{cases}.
\end{equation}

By combining Equations \eqref{theta1}, \eqref{theta2}, and \eqref{theta3}, we have addressed all scenarios where $\mu<a$, $a\leq \mu \leq c$, and $\mu>c$. As such, we derive the optimal value 
of $\theta^*$ for ADMM as follows
\begin{equation*}
	\theta^*=
	\begin{cases}
		\sqrt{\mu a},\quad & \text{if}\;  \mu \leq 2b- a\\
		\sqrt{\frac{\mu a b}{\mu+a-b}},\quad & \text{if}\;  2b - a <\mu \leq a \\
		\mu,\quad & \text{if}\; a < \mu \leq b\\
		\sqrt{\frac{\mu c b}{\mu+c-b}},\quad & \text{if} \;b<\mu	
	\end{cases},
\end{equation*}
which verifies the validity of the closed-form solution in Theorem 4.

% \section*{Acknowledgements}
% Jintao Song is partially supported by the Chinese Scholarship Council (project number: 202208370130).

\bibliography{aaai24}

\begin{thebibliography}{33}
\providecommand{\natexlab}[1]{#1}

\bibitem[{Bartlett and Duan(2021)}]{bartlett2021accelerated}
Bartlett, J.; and Duan, J. 2021.
\newblock Accelerated first order methods for variational imaging.
\newblock \emph{arXiv preprint arXiv:2110.02813}.

\bibitem[{Bartlett(1951)}]{bartlett1951inverse}
Bartlett, M.~S. 1951.
\newblock An inverse matrix adjustment arising in discriminant analysis.
\newblock \emph{The Annals of Mathematical Statistics}, 22(1): 107--111.

\bibitem[{Beg et~al.(2005)Beg, Miller, Trouv{\'e}, and
  Younes}]{beg2005computing}
Beg, M.~F.; Miller, M.~I.; Trouv{\'e}, A.; and Younes, L. 2005.
\newblock Computing large deformation metric mappings via geodesic flows of
  diffeomorphisms.
\newblock \emph{International Journal of Computer Vision}, 61: 139--157.

\bibitem[{Boley(2013)}]{boley2013local}
Boley, D. 2013.
\newblock Local linear convergence of the alternating direction method of
  multipliers on quadratic or linear programs.
\newblock \emph{SIAM Journal on Optimization}, 23(4): 2183--2207.

\bibitem[{Boyd et~al.(2011)Boyd, Parikh, Chu, Peleato, Eckstein
  et~al.}]{boyd2011distributed}
Boyd, S.; Parikh, N.; Chu, E.; Peleato, B.; Eckstein, J.; et~al. 2011.
\newblock Distributed optimization and statistical learning via the alternating
  direction method of multipliers.
\newblock \emph{Foundations and Trends{\textregistered} in Machine learning},
  3(1): 1--122.

\bibitem[{Cand{\`e}s et~al.(2011)Cand{\`e}s, Li, Ma, and
  Wright}]{candes2011robust}
Cand{\`e}s, E.~J.; Li, X.; Ma, Y.; and Wright, J. 2011.
\newblock Robust principal component analysis?
\newblock \emph{Journal of the ACM (JACM)}, 58(3): 1--37.

\bibitem[{Capus and Brown(2003)}]{capus2003fractional}
Capus, C.; and Brown, K. 2003.
\newblock Fractional Fourier transform of the Gaussian and fractional domain
  signal support.
\newblock \emph{IEE Proceedings-Vision, Image and Signal Processing}, 150(2):
  99--106.

\bibitem[{Chan, Wang, and Elgendy(2016)}]{chan2016plug}
Chan, S.~H.; Wang, X.; and Elgendy, O.~A. 2016.
\newblock Plug-and-play ADMM for image restoration: fixed-point convergence and
  applications.
\newblock \emph{IEEE Transactions on Computational Imaging}, 3(1): 84--98.

\bibitem[{De~Pierro and Iusem(1986)}]{de1986relaxed}
De~Pierro, A.~R.; and Iusem, A. 1986.
\newblock A relaxed version of Bregman's method for convex programming.
\newblock \emph{Journal of Optimization Theory and Applications}, 51: 421--440.

\bibitem[{Deng and Yin(2016)}]{deng2016global}
Deng, W.; and Yin, W. 2016.
\newblock On the global and linear convergence of the generalized alternating
  direction method of multipliers.
\newblock \emph{Journal of Scientific Computing}, 66: 889--916.

\bibitem[{Duan et~al.(2023)Duan, Jia, Bartlett, Lu, and
  Qiu}]{duan2023arbitrary}
Duan, J.; Jia, X.; Bartlett, J.; Lu, W.; and Qiu, Z. 2023.
\newblock Arbitrary order total variation for deformable image registration.
\newblock \emph{Pattern Recognition}, 109318.

\bibitem[{Eckstein(1994)}]{eckstein1994parallel}
Eckstein, J. 1994.
\newblock Parallel alternating direction multiplier decomposition of convex
  programs.
\newblock \emph{Journal of Optimization Theory and Applications}, 80(1):
  39--62.

\bibitem[{Fran{\c{c}}a and Bento(2016)}]{francca2016explicit}
Fran{\c{c}}a, G.; and Bento, J. 2016.
\newblock An explicit rate bound for over-relaxed ADMM.
\newblock In \emph{2016 IEEE International Symposium on Information Theory
  (ISIT)}, 2104--2108. IEEE.

\bibitem[{Ghadimi et~al.(2014)Ghadimi, Teixeira, Shames, and
  Johansson}]{ghadimi2014optimal}
Ghadimi, E.; Teixeira, A.; Shames, I.; and Johansson, M. 2014.
\newblock Optimal parameter selection for the alternating direction method of
  multipliers (ADMM): quadratic problems.
\newblock \emph{IEEE Transactions on Automatic Control}, 60(3): 644--658.

\bibitem[{Goldstein and Osher(2009)}]{goldstein2009split}
Goldstein, T.; and Osher, S. 2009.
\newblock The split Bregman method for L1-regularized problems.
\newblock \emph{SIAM Journal on Imaging Sciences}, 2(2): 323--343.

\bibitem[{Hou, Li, and Zhang(2022)}]{hou2022truncated}
Hou, R.; Li, F.; and Zhang, G. 2022.
\newblock Truncated residual based plug-and-play ADMM algorithm for MRI
  reconstruction.
\newblock \emph{IEEE Transactions on Computational Imaging}, 8: 96--108.

\bibitem[{Jia et~al.(2021)Jia, Thorley, Chen, Qiu, Shen, Styles, Chang,
  Leonardis, De~Marvao, O’Regan et~al.}]{jia2021learning}
Jia, X.; Thorley, A.; Chen, W.; Qiu, H.; Shen, L.; Styles, I.~B.; Chang, H.~J.;
  Leonardis, A.; De~Marvao, A.; O’Regan, D.~P.; et~al. 2021.
\newblock Learning a model-driven variational network for deformable image
  registration.
\newblock \emph{IEEE Transactions on Medical Imaging}, 41(1): 199--212.

\bibitem[{Li et~al.(2022)Li, Kailkhura, Goldhahn, Ray, and
  Varshney}]{li2022robust}
Li, Q.; Kailkhura, B.; Goldhahn, R.; Ray, P.; and Varshney, P.~K. 2022.
\newblock Robust decentralized learning using ADMM with unreliable agents.
\newblock \emph{IEEE Transactions on Signal Processing}, 70: 2743--2757.

\bibitem[{Liu et~al.(2023)Liu, Huang, Yang, and Wang}]{liu2023distributed}
Liu, Y.; Huang, K.; Yang, C.; and Wang, Z. 2023.
\newblock Distributed network reconstruction based on binary compressed sensing
  via ADMM.
\newblock \emph{IEEE Transactions on Network Science and Engineering}.

\bibitem[{Lu et~al.(2016)Lu, Duan, Qiu, Pan, Liu, and
  Bai}]{lu2016implementation}
Lu, W.; Duan, J.; Qiu, Z.; Pan, Z.; Liu, R.~W.; and Bai, L. 2016.
\newblock Implementation of high-order variational models made easy for image
  processing.
\newblock \emph{Mathematical Methods in the Applied Sciences}, 39(14):
  4208--4233.

\bibitem[{Mavromatis, Foti, and Vavalis(2020)}]{mavromatis2020auto}
Mavromatis, C.; Foti, M.; and Vavalis, M. 2020.
\newblock Auto-tuned weighted-penalty parameter ADMM for distributed optimal
  power flow.
\newblock \emph{IEEE Transactions on Power Systems}, 36(2): 970--978.

\bibitem[{Mhanna, Verbi{\v{c}}, and Chapman(2018)}]{mhanna2018adaptive}
Mhanna, S.; Verbi{\v{c}}, G.; and Chapman, A.~C. 2018.
\newblock Adaptive ADMM for distributed AC optimal power flow.
\newblock \emph{IEEE Transactions on Power Systems}, 34(3): 2025--2035.

\bibitem[{Nesterov(1983)}]{nesterov1983method}
Nesterov, Y.~E. 1983.
\newblock A method of solving a convex programming problem with convergence
  rate $O(\frac{1}{k^2})$.
\newblock In \emph{Doklady Akademii Nauk}, volume 269, 543--547. Russian
  Academy of Sciences.

\bibitem[{O’donoghue and Candes(2015)}]{o2015adaptive}
O’donoghue, B.; and Candes, E. 2015.
\newblock Adaptive restart for accelerated gradient schemes.
\newblock \emph{Foundations of computational mathematics}, 15: 715--732.

\bibitem[{Stellato et~al.(2020)Stellato, Banjac, Goulart, Bemporad, and
  Boyd}]{stellato2020osqp}
Stellato, B.; Banjac, G.; Goulart, P.; Bemporad, A.; and Boyd, S. 2020.
\newblock OSQP: An operator splitting solver for quadratic programs.
\newblock \emph{Mathematical Programming Computation}, 12(4): 637--672.

\bibitem[{Teixeira et~al.(2015)Teixeira, Ghadimi, Shames, Sandberg, and
  Johansson}]{teixeira2015admm}
Teixeira, A.; Ghadimi, E.; Shames, I.; Sandberg, H.; and Johansson, M. 2015.
\newblock The ADMM algorithm for distributed quadratic problems: parameter
  selection and constraint preconditioning.
\newblock \emph{IEEE Transactions on Signal Processing}, 64(2): 290--305.

\bibitem[{Thorley et~al.(2021)Thorley, Jia, Chang, Liu, Bunting, Stoll,
  de~Marvao, O’Regan, Gkoutos, Kotecha et~al.}]{thorley2021nesterov}
Thorley, A.; Jia, X.; Chang, H.~J.; Liu, B.; Bunting, K.; Stoll, V.; de~Marvao,
  A.; O’Regan, D.~P.; Gkoutos, G.; Kotecha, D.; et~al. 2021.
\newblock Nesterov accelerated ADMM for fast diffeomorphic image registration.
\newblock In \emph{Medical Image Computing and Computer Assisted
  Intervention--MICCAI 2021: 24th International Conference, Strasbourg, France,
  September 27--October 1, 2021, Proceedings, Part IV 24}, 150--160.

\bibitem[{Wang et~al.(2019)Wang, Yu, Chen, and Zhao}]{wang2019admm}
Wang, J.; Yu, F.; Chen, X.; and Zhao, L. 2019.
\newblock Admm for efficient deep learning with global convergence.
\newblock In \emph{Proceedings of the 25th ACM SIGKDD International Conference
  on Knowledge Discovery \& Data Mining}, 111--119.

\bibitem[{Wohlberg(2017)}]{wohlberg2017admm}
Wohlberg, B. 2017.
\newblock ADMM penalty parameter selection by residual balancing.
\newblock \emph{arXiv preprint arXiv:1704.06209}.

\bibitem[{Xu, Figueiredo, and Goldstein(2017)}]{xu2017adaptive}
Xu, Z.; Figueiredo, M.; and Goldstein, T. 2017.
\newblock Adaptive ADMM with spectral penalty parameter selection.
\newblock In \emph{Artificial Intelligence and Statistics}, 718--727. PMLR.

\bibitem[{Yazaki, Tanaka, and Chan(2019)}]{yazaki2019interpolation}
Yazaki, Y.; Tanaka, Y.; and Chan, S.~H. 2019.
\newblock Interpolation and denoising of graph signals using plug-and-play
  ADMM.
\newblock In \emph{ICASSP 2019-2019 IEEE International Conference on Acoustics,
  Speech and Signal Processing (ICASSP)}, 5431--5435.

\bibitem[{Zhang et~al.(2020)Zhang, Yan, Li, Jiang, Li, and
  Chen}]{zhang2020privacy}
Zhang, R.; Yan, K.; Li, G.; Jiang, T.; Li, X.; and Chen, H. 2020.
\newblock Privacy-preserving decentralized power system economic dispatch
  considering carbon capture power plants and carbon emission trading scheme
  via over-relaxed ADMM.
\newblock \emph{International Journal of Electrical Power \& Energy Systems},
  121: 106094.

\bibitem[{Zhou and Li(2023)}]{zhou2023federated}
Zhou, S.; and Li, G.~Y. 2023.
\newblock Federated learning via inexact ADMM.
\newblock \emph{IEEE Transactions on Pattern Analysis and Machine
  Intelligence}.

\end{thebibliography}

\end{document}